\newcolumntype{C}{>{$}c<{$}}
\newtheorem{theorem}{Theorem}
\newtheorem{prop}[theorem]{Proposition}
\newtheorem{lemma}[theorem]{Lemma}
\newtheorem{rem}[theorem]{Remark}
\begin{document}

\title{The graphs of non-degenerate linear codes}
\author{Mark Pankov}
\subjclass[2000]{05C60, 94B05, 94B27}
\keywords{linear code, Grassmann graph, adjacency preserving map}
\address{Faculty of Mathematics and Computer Science, University of Warmia and Mazury, S{\l}oneczna 54, Olsztyn, Poland}
\email{pankov@matman.uwm.edu.pl}

\maketitle

\begin{abstract}
We consider the Grassmann graph of $k$-dimensional subspaces of an $n$-dimensional vector space over the $q$-element field
and its subgraph $\Gamma(n,k)_q$ which consists of all non-degenerate linear $[n,k]_q$ codes. We assume that $1<k<n-1$.
It is well-known that every automorphism of the Grassmann graph is induced 
by a semilinear automorphism of the corresponding vector  space or a semilinear isomorphism to the dual vector space and 
the second possibility is realized only for $n=2k$.
Our result is the following:
if $q\ge 3$ or $k\ne 2$,
then every isomorphism of  $\Gamma(n,k)_{q}$ to a subgraph of the Grassmann graph
can be uniquely extended to an automorphism of the Grassmann graph;
in the case when $q=k=2$, there are subgraphs of the Grassmann graph
isomorphic to $\Gamma(n,k)_{q}$ and such that isomorphisms between these subgraphs and $\Gamma(n,k)_{q}$
cannot be extended to automorphisms of the Grassmann graph.
\end{abstract}

\section{Introduction}
Grassmann graphs are interesting for many reasons.
First of all, these are classical examples of distance regular graphs \cite{BCN}. Also, they are closely connected to one class of Tits buildings \cite{BC,Pankov1,Pasini,Shult}.
The first significant result concerning Grassmann graphs was Chow's theorem \cite{Chow} which describes their automorphisms.
A survey of results in that spirit can be found in \cite{Pankov2}.
Recently,  Chow's theorem was used to prove some Wigner-type theorems for Hilbert Grassmannians \cite{GS,Geher,Pankov3}.

The Grassmann graph formed by $k$-dimensional subspaces of  an $n$-dimensional vector space over the $q$-element field
can be considered as the graph of  linear $[n,k]_q$ codes, where two codes are connected by an edge if 
they have the maximal number of common codewords
(we assume that $1<k<n-1$, since in the cases $k=1,n-1$ the Grassmann graph is complete). 
In practice, only non-degenerate linear codes are useful (a code is non-degenerate if every coordinate functional is not vanished on this code).
For a degenerate linear $[n,k]_q$ code every generator matrix contains zero column 
which means that we have, in fact, a linear $[n-1,k]_q$ code.

The graph (the subgraph of the Grassmann graph) formed by non-degenerate linear $[n,k]_q$ codes is investigated in  \cite{KP1,KP2}.
By \cite{KP1}, the distances in this graph coincide with the distances in the Grassmann graph if and only if
$$n<(q+1)^2 +k-2;$$
maximal cliques and automorphisms are determined in \cite{KP2}.
Subgraphs of the Grassmann graph formed by different families of linear codes are considered in \cite{CGK,KPP,KP3}.

By Chow's theorem, every automorphism of the Grassmann graph is induced  by a semilinear automorphism of  
the corresponding vector space or a semilinear isomorphism to the dual vector space
and the second possibility is realized only for $n=2k$.
We show that for the case when $q\ge 3$ or $k\ge 3$ every subgraph $\Gamma$ of the Grassmann graph isomorphic to 
the graph of non-degenerate linear $[n,k]_q$ codes
can be obtained form this graph by an automorphism of the Grassmann graph;
in other words, there is a coordinate system such that $\Gamma$ is the graph of non-degenerate codes 
or $n=2k$ and $\Gamma$ is formed by all codes dual to non-degenerate codes.

In the case when $q=k=2$, we construct a counterexample showing that the above statement fails.

\section{Result}
Let $V$ be an $n$-dimensional vector space over a field ${\mathbb F}$ (not necessarily finite)
and let ${\mathcal G}_{k}(V)$  be the Grassmannian consisting of all $k$-dimensional subspaces of $V$.
Two $k$-dimensional subspaces of $V$ are called {\it adjacent} if their intersection is $(k-1)$-dimensional, 
or equivalently, their sum is $(k+1)$-dimensional.
The {\it Grassmann graph} $\Gamma_{k}(V)$ is the simple graph whose vertex set is ${\mathcal G}_{k}(V)$ and 
two vertices are connected by an edge in this graph if the corresponding $k$-dimensional subspaces are adjacent.
In the cases when $k=1,n-1$, any two distinct $k$-dimensional subspaces are adjacent. 
The Grassmann graph is connected.
Chow's theorem \cite{Chow} provides a description of automorphisms of $\Gamma_{k}(V)$
for $1<k<n-1$ (if $k=1,n-1$, then every bijective transformation of ${\mathcal G}_{k}(V)$ is a graph automorphism).

Recall that a transformation $l:V\to V$ is called {\it semilinear} if 
$$l(x+y)=l(x)+l(y)$$
for all $x,y\in V$ and there is a homomorphism $\sigma: {\mathbb F}\to {\mathbb F}$
such that 
$$l(ax)=\sigma(a)l(x)$$
for all $a\in {\mathbb F}$ and $x\in V$.
In the case when $l$ is bijective and $\sigma$ is a field automorphism, 
we say that $l$ is a {\it semilinear automorphism} of $V$.
Every semilinear automorphism of $V$ induces an automorphism of the graph $\Gamma_{k}(V)$.
Let $\omega$ be a non-degenerate symmetric form on $V$.
For every subspace $X\subset V$ we denote by $X^{\perp}_{\omega}$ the orthogonal complement of $X$ associated to the form $\omega$.
Two subspaces $X,Y\subset V$ of the same dimension are adjacent if and only if their orthogonal complements $X^{\perp}_{\omega},Y^{\perp}_{\omega}$ are adjacent,
i.e. the orthocomplementary map $X\to X^{\perp}_{\omega}$ provides an isomorphism between $\Gamma_{k}(V)$ and $\Gamma_{n-k}(V)$;
this is an automorphism of $\Gamma_{k}(V)$ if $n=2k$.
Chow's theorem states that every automorphism of $\Gamma_{k}(V)$, $1<k<n-1$ is induced by a semilinear automorphism of $V$
or it is the composition of the orthocomplementary map and a graph automorphism  induced by a semilinear automorphism of $V$;
the second possibility is realized only for $n=2k$.

Suppose that ${\mathbb F}={\mathbb F}_{q}$ is the finite field consisting of $q$ elements
and $V={\mathbb F}^n$.
The standard basis of $V$ is formed by the vectors 
$$e_{1}=(1,0,\dots,0),e_2=(0,1,0,\dots,0),\dots,e_{n}=(0,\dots,0,1).$$
Let $C_{i}$ be the kernel of the $i$-th coordinate functional
$(x_{1},\dots,x_{n})\to x_{i}$, i.e. the hyperplane of $V$ spanned by all $e_j$ with $j\ne i$.
Every $k$-dimensional subspace of $V$ is  a {\it linear} $[n,k]_{q}$ {\it code}.
Such a code $C$ is {\it non-degenerate} if
the restriction of every coordinate functional to $C$ is non-zero, i.e. $C$ is not contained in any coordinate hyperplane $C_{i}$ \cite{TVN}.
All non-degenerate linear $[n,k]_{q}$ codes form the set 
$${\mathcal C}(n,k)_{q}=
{\mathcal G}_{k}(V)\setminus \left(\bigcup_{i=1}^{n}{\mathcal G}_{k}(C_{i}) \right).$$
Denote by $\Gamma(n,k)_{q}$ the restriction of the Grassmann graph $\Gamma_{k}(V)$ to the set ${\mathcal C}(n,k)_{q}$,
i.e.  $\Gamma(n,k)_{q}$ is the simple graph whose vertex set is ${\mathcal C}(n,k)_{q}$
and two vertices are connected by an edge if the corresponding $k$-dimensional subspaces are adjacent.
This graph is connected \cite{KP1}.
By \cite{KP2}, every automorphism of $\Gamma(n,k)_{q}$, $1<k<n-1$ is induced by a monomial semilinear automorphism of $V$
(a semilinear automorphism sending every $e_i$ to a scalar multiple of $e_j$).

Our main result is the following.

\begin{theorem}\label{main-1}
Suppose that $1<k<n-1$. If 
\begin{equation}\label{eq-th}
q\ge 3\;\mbox{ or }\;k\ge 3,
\end{equation}
then every isomorphism of  $\Gamma(n,k)_{q}$ to a subgraph of $\Gamma_{k}(V)$
can be uniquely extended to an automorphism of $\Gamma_{k}(V)$.
In the case when $q=k=2$, there are subgraphs of $\Gamma_{k}(V)$ 
isomorphic to $\Gamma(n,k)_{q}$ and such that isomorphisms between these subgraphs and $\Gamma(n,k)_{q}$
cannot be extended to automorphisms of $\Gamma_{k}(V)$.
\end{theorem}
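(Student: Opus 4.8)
The plan is to prove the two halves of Theorem~\ref{main-1} by quite different means. For the positive part (when $q\ge 3$ or $k\ge 3$), let $f$ be an isomorphism of $\Gamma(n,k)_q$ onto a subgraph of $\Gamma_k(V)$. The strategy is to reconstruct enough of the Grassmann-graph structure on the image to recognize $f$ as the restriction of a Chow automorphism. First I would recall the classification of maximal cliques in a Grassmann graph: every maximal clique is a \emph{star} $[X)_k=\{U\in\mathcal G_k(V):X\subset U\}$ for a $(k-1)$-subspace $X$, or a \emph{top} $(Y]_k=\{U\in\mathcal G_k(V):U\subset Y\}$ for a $(k+1)$-subspace $Y$. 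The key input is the description of maximal cliques of $\Gamma(n,k)_q$ from \cite{KP2}: under hypothesis \eqref{eq-th} these are precisely the intersections of stars and tops of $\Gamma_k(V)$ with $\mathcal C(n,k)_q$ that remain ``large enough,'' and crucially the two types can be told apart combinatorially (by the pattern in which maximal cliques pairwise intersect — in a point, or in a line's worth of codes, or not at all). I would show $f$ sends each maximal clique of $\Gamma(n,k)_q$ into a maximal clique of $\Gamma_k(V)$, hence into a star or a top, and that the star/top dichotomy is preserved (or uniformly reversed, which can happen only if $n=2k$).

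Next I would build from $f$ the induced map on the ambient projective-space incidence structure. Two codes $C,C'$ lie in a common star of $\Gamma(n,k)_q$ iff $\dim(C\cap C')=k-1$ and $C\cap C'$ is not contained in any coordinate hyperplane; using enough such pairs one recovers, for each non-degenerate $(k-1)$-subspace $X$, the star $[X)_k$, and then $X$ itself as the intersection of all its members. Likewise one recovers non-degenerate $(k+1)$-subspaces from tops. The essential point is that $\Gamma(n,k)_q$ contains enough non-degenerate codes, and enough non-degenerate $(k\pm1)$-subspaces, to pin down \emph{all} points and hyperplanes of $V$: every point of $V$ is the intersection of the $(k-1)$-subspaces through it that avoid the coordinate hyperplanes, provided $q\ge 3$ or $k\ge 3$ (this is exactly where small-parameter degeneracy enters). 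Having transported the relevant stars and tops, I get an induced bijection between suitable sub-collections of $\mathcal G_{k-1}(V)$ and $\mathcal G_{k+1}(V)$ (or, in the self-dual case, a bijection interchanging them) that preserves incidence; by the fundamental theorem of projective geometry this bijection extends to a semilinear automorphism of $V$ (resp.\ a semilinear isomorphism $V\to V^{*}$). That semilinear map induces an automorphism $g$ of $\Gamma_k(V)$, and one checks $g$ agrees with $f$ on $\mathcal C(n,k)_q$; uniqueness of the extension is automatic because $\mathcal C(n,k)_q$ meets every maximal clique of $\Gamma_k(V)$ in at least two vertices, so an automorphism of $\Gamma_k(V)$ is determined by its restriction to $\Gamma(n,k)_q$. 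The main obstacle here is the clique-preservation and star/top-discrimination step: $f$ is only assumed to embed $\Gamma(n,k)_q$ into $\Gamma_k(V)$, so a priori the image of a maximal clique could fail to be maximal in $\Gamma_k(V)$ or could be a ``mixed'' clique — ruling this out, and handling the borderline sizes allowed by \eqref{eq-th}, is the delicate part and is where the explicit clique inventory of \cite{KP2} must be used carefully.

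For the negative part, with $q=k=2$, I would construct an explicit subgraph $\Gamma\subset\Gamma_k(V)$ abstractly isomorphic to $\Gamma(n,2)_2$ but not $\mathrm{Aut}(\Gamma_k(V))$-equivalent to it. The guiding idea is that over $\mathbb F_2$ and with $k=2$, a maximal clique of $\Gamma(n,2)_2$ coming from a top $(Y]_2$ with $Y$ a non-degenerate $3$-subspace has very few members — a line (a $3$-point, $3$-line Fano subplane minus the degenerate points) can collapse so far that a ``top'' clique becomes indistinguishable in size and intersection pattern from a ``star'' clique. Exploiting this, I would take $\Gamma(n,2)_2$ and replace a carefully chosen family of top-cliques by combinatorially interchangeable alternatives — concretely, pick one non-degenerate $3$-subspace $Y$, delete the two non-degenerate lines it contains, and re-attach two new lines through a suitable point in a way that keeps all adjacencies but changes which subspaces actually underlie the graph. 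One then exhibits the isomorphism $\Gamma\cong\Gamma(n,2)_2$ directly from the local structure, and shows no Chow automorphism can realize it: any such automorphism would have to send the genuine tops of $\Gamma(n,2)_2$ to genuine tops (or uniformly to stars) of $\Gamma_k(V)$, whereas in $\Gamma$ the modified cliques sit as stars where tops should be. The obstacle in this half is bookkeeping: verifying that the surgery preserves \emph{all} edges and non-edges of the graph (not just locally) and that the resulting graph is still connected and has the same parameters, so that it really is isomorphic to $\Gamma(n,2)_2$; once the construction is pinned down, the non-extendability follows from the $q\ge3$-or-$k\ge3$ analysis failing precisely at $q=k=2$.
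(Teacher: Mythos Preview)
Your broad architecture for the positive part --- control maximal cliques, pass to an incidence-preserving map on lower/higher Grassmannians, invoke the Fundamental Theorem of Projective Geometry --- matches the paper's. However, you are missing the paper's central technical device, and without it the star/top discrimination step you flag as ``delicate'' does not go through as you sketch it. The paper does \emph{not} attempt to show directly that $f$ respects the star/top dichotomy on all maximal cliques of $\Gamma(n,k)_q$. Instead it first treats only the \emph{maximal} stars $\mathcal S(X)$ with $X\in\mathcal C(n,k-1)_q$ (these are full stars of $\Gamma_k(V)$, so size/intersection arguments work cleanly; Lemma~\ref{lemma-g1} handles the three regimes $2k<n$, $2k=n$, $2k>n$ separately and the $2k>n$ case needs a nontrivial inclusion--exclusion estimate). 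This yields an induced map $f_{k-1}:\mathcal C(n,k-1)_q\to\mathcal G_{k-1}(V)$, and the paper then iterates, producing a tower $f=f_k,f_{k-1},\dots,f_1$. Only after reaching $f_1$ does one extend to a map $g_1$ on all of $\mathcal G_1(V)$ and check it is a line-preserving bijection. Your plan to go from $(k-1)$-subspaces straight to points by intersection hides real work: for $q=2$ the set $\mathcal C(n,1)_2$ is a single point, and many $1$-dimensional subspaces (those of weight $\le 2$) do not define stars of $\Gamma(n,2)_2$ at all, so $g_1$ must be defined on those by an ad hoc two-step extension (Lemmas~\ref{lemma2-1}--\ref{lemma2-3}) whose line-preservation proof is a lengthy case analysis. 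Also, your uniqueness argument (``$\mathcal C(n,k)_q$ meets every maximal clique of $\Gamma_k(V)$ in at least two vertices'') is false as stated: tops $\mathcal T(Y)$ with $Y\notin\mathcal C(n,k+1)_q$ meet $\mathcal C(n,k)_q$ in the empty set.

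For the counterexample your proposal is genuinely different from the paper's and, as written, unlikely to work. A local surgery on a single top affects many edges at once (every non-degenerate line in $Y$ is adjacent in $\Gamma(n,2)_2$ to lines outside $Y$), and checking that \emph{all} non-adjacencies survive is not just bookkeeping --- one must exhibit an actual graph isomorphism, not merely preserve parameters. The paper instead gives a global, explicit map $h:\mathcal C(n,2)_2\to\mathcal G_2(V)$: fix the hyperplane $H$ spanned by the weight-$(n-1)$ vectors through $e_n$, leave every code containing $(1,\dots,1)$ or contained in $H$ fixed, and send each remaining $X=P_I+P_J$ (with $I\cup J=\{1,\dots,n\}$) to $X^c:=P_{I^c}+P_{J^c}\subset H$. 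One verifies directly that $h$ is injective and adjacency preserving, then exhibits a single pair $X,Y$ that are non-adjacent but with $h(X),h(Y)$ adjacent; since any automorphism of $\Gamma_k(V)$ preserves adjacency in both directions, $h$ cannot extend.
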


Note that the subgraphs considered in Theorem  \ref{main-1} are not supposed to be induced subgraphs.

\begin{rem}{\rm
If $k=1,n-1$, then every injective map of ${\mathcal C}(n,k)_q$ to ${\mathcal G}_k(V)$
is an isomorphism of $\Gamma(n,k)_q$ to a subgraph of $\Gamma_k(V)$
and every bijective transformation of ${\mathcal G}_k(V)$ is an automorphism of $\Gamma_k(V)$.
In this case, each isomorphism of $\Gamma(n,k)_q$ to a subgraph of $\Gamma_k(V)$ can be extended to 
an automorphism of $\Gamma_k(V)$, but such an extension is not unique.
}\end{rem}

\begin{rem}{\rm
Let us take a basis of $V$ and consider the hyperplanes $H_1,\dots,H_n\subset V$ spanned by vectors from this basis. 
The restriction of $\Gamma_k(V)$ to the set
\begin{equation}\label{eq-set}
{\mathcal G}_{k}(V)\setminus \left(\bigcup_{i=1}^{n}{\mathcal G}_{k}(H_{i}) \right)
\end{equation}
is isomorphic to $\Gamma(n,k)_{q}$.
If $1<k<n-1$ and \eqref{eq-th} holds, then Theorem \ref{main-1} 
states that every subgraph $\Gamma$ of $\Gamma_{k}(V)$  isomorphic to $\Gamma(n,k)_{q}$
can be obtained from $\Gamma(n,k)_{q}$ by an automorphism of $\Gamma_{k}(V)$, i.e.
$\Gamma$ is the restriction of $\Gamma_{k}(V)$ to a subset of type \eqref{eq-set}
or $n=2k$ and $\Gamma$ is the restriction of $\Gamma_{k}(V)$ to the set formed by the orthogonal complements of 
elements from a subset of type \eqref{eq-set}.
}\end{rem}

\begin{rem}{\rm
Suppose that $V={\mathbb F}^n$ and ${\mathbb F}$ is an infinite field.
The direct analogue of the first part of Theorem \ref{main-1}  does not hold for this case.
This is related to the fact that $\Gamma_k(V)$ can contain proper subgraphs isomorphic to $\Gamma_k(V)$.
Consider two examples. 
If $\sigma$ is a non-surjective endomorphism of ${\mathbb F}$
(such endomorphisms exist, for example, for the field of complex numbers), 
then every $\sigma$-linear embedding of $V$ to itself sending bases of $V$ to bases (strong semilinear embedding)
induces an isomorphism of $\Gamma_k(V)$ to a proper subgraph of $\Gamma_k(V)$;
see \cite[Chapter 3]{Pankov2} for examples of more complicated embeddings of $\Gamma_k(V)$ to itself.
Suppose that ${\mathbb F}={\mathbb R}$ and take any bijection $g$ of ${\mathcal G}_k(V)$ to 
a maximal clique ${\mathcal C}$ of $\Gamma_k(V)$ 
(such bijections exist, since ${\mathcal G}_k(V)$ and ${\mathcal C}$ are of the same cardinality).
Consider the subgraph of $\Gamma_k(V)$ whose vertex set is ${\mathcal C}$
and $X,Y\in {\mathcal C}$ are connected by an edge if and only if $g^{-1}(X),g^{-1}(Y)$ are connected by an edge in $\Gamma_k(V)$.
All isometric embeddings of Grassmann graphs (embedding preserving the path distance) are known, 
they are induced by semilinear embeddings of special type \cite[Section 3.4]{Pankov2}.
A classification of non-isometric embeddings is an open problem \cite[Chapter 3]{Pankov2}.
}\end{rem}

\section{Preliminaries}

\subsection{Maximal cliques in the Grassmann graph}
Recall that a subset in the vertex set of a graph is called a {\it clique} if any two vertices from this subset are connected by an edge.
A clique ${\mathcal X}$ is said to be {\it maximal} if every clique containing ${\mathcal X}$ coincides with ${\mathcal X}$.

If $1<k<n-1$, then every maximal clique of $\Gamma_{k}(V)$ is of one of the following types:
\begin{enumerate}
\item[$\bullet$] the star ${\mathcal S}(X)$, $X\in {\mathcal G}_{k-1}(V)$ which consists of
all $k$-dimensional subspaces containing $X$;
\item[$\bullet$] the top ${\mathcal T}(Y)={\mathcal G}_{k}(Y)$, $Y\in {\mathcal G}_{k+1}(V)$.
\end{enumerate}
An $m$-dimensional vector space $W$ over ${\mathbb F}$ contains precisely
$$[m]_q=\frac{q^m-1}{q-1}=q^{m-1}+\dots+q+1$$
$1$-dimensional subspaces; this is also the number of hyperplanes in $W$. 
Since ${\mathcal S}(X)$, $X\in {\mathcal G}_{k-1}(V)$ can be identified with ${\mathcal G}_1(V/X)$,
every star of $\Gamma_k(V)$ consists of
$$[n-k+1]_q$$
elements.
Every top of $\Gamma_k(V)$ contains precisely
$$[k+1]_q$$
elements. 
Stars and tops have the same number of elements if and only if $n=2k$.

The intersection of two distinct maximal cliques of the same type contains at most one element.
This intersection is non-empty if and only if the associated $(k-1)$-dimensional or $(k+1)$-dimensional subspaces
are adjacent.

For a $(k-1)$-dimensional subspace  $X\subset V$ and a $(k+1)$-dimensional subspace $Y\subset V$
the intersection of the associated star and top of $\Gamma_{k}(V)$  is non-empty if and only if $X\subset Y$.
In this case, ${\mathcal S}(X)\cap {\mathcal T}(Y)$ contains precisely $q+1$ elements. 
Every such intersection is called a {\it line}  of ${\mathcal G}_{k}(V)$.

\subsection{Maximal cliques in the graph of non-degenerate linear codes}
As in the previous subsection, we assume that $1<k<n-1$.
For a $(k-1)$-dimensional subspace  $X\subset V$ and a $(k+1)$-dimensional subspace $Y\subset V$
we denote by  ${\mathcal S}^c(X)$ and ${\mathcal T}^c(Y)$ the intersection of ${\mathcal C}(n,k)_q$
with the star ${\mathcal S}(X)$ and the top ${\mathcal T}(Y)$, respectively.
Each of these intersections is a clique of $\Gamma(n,k)_q$ (if it is non-empty), but such a clique need not to be maximal.

Denote by $c(X)$ the number of the coordinate hyperplanes containing  a subspace $X\subset V$.
If $X\in{\mathcal C}(n,k-1)_q$, then
${\mathcal S}^c(X)={\mathcal S}(X)$ is a maximal clique of $\Gamma_k(V)$ and, consequently, it is a maximal clique of $\Gamma(n,k)_q$.
If $X$ is a $(k-1)$-dimensional subspace which does not belong to ${\mathcal C}(n,k-1)_q$, then 
$$0<c(X)\le n-k+1$$
and, by \cite[Lemma 1]{KP2},
\begin{equation}\label{eq-star}
|{\mathcal S}^c(X)|=(q-1)^{c(X)-1}q^{n-k-c(X)+1}
\end{equation}
(the equality does not hold if $c(X)=0$).

\begin{prop}\label{prop-star}
For a $(k-1)$-dimensional subspace $X\subset V$ 
the following assertions are fulfilled:
\begin{enumerate}
\item[{\rm (i)}] If $q\ge 3$, then ${\mathcal S}^c(X)$ is a maximal clique of $\Gamma(n,k)_q$.
\item[{\rm (ii)}]
For $q=2$ the set ${\mathcal S}^c(X)$ is  a maximal clique of $\Gamma(n,k)_q$ if and only if 
$$c(X)\le n-k-1.$$
\item[{\rm (iii)}] If ${\mathcal S}^c(X)$ is  a maximal clique of $\Gamma(n,k)_q$, 
then there is no $(k+1)$-dimensional subspace $Y\subset V$ such that 
${\mathcal S}^c(X)={\mathcal T}^c(Y)$.
\end{enumerate}
\end{prop}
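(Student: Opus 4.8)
The plan is to reduce every assertion to the known list of maximal cliques of $\Gamma_{k}(V)$ (stars and tops), to the size formula \eqref{eq-star}, and to the fact that a line ${\mathcal S}(X)\cap{\mathcal T}(Y)$ has exactly $q+1$ elements. Two elementary remarks do most of the work. First, if $|{\mathcal S}^c(X)|\ge 2$, then any two distinct elements of ${\mathcal S}^c(X)$ meet precisely in $X$, so ${\mathcal S}(X)$ is the \emph{only} star of $\Gamma_{k}(V)$ containing ${\mathcal S}^c(X)$; hence, if in addition ${\mathcal S}^c(X)$ is contained in no top, then it is a maximal clique of $\Gamma(n,k)_{q}$, because any clique of $\Gamma(n,k)_{q}$ containing it lies in a maximal clique of $\Gamma_{k}(V)$, necessarily ${\mathcal S}(X)$, hence inside ${\mathcal S}(X)\cap{\mathcal C}(n,k)_{q}={\mathcal S}^c(X)$. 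Second, if ${\mathcal S}^c(X)$ is contained in some top ${\mathcal T}(Y)$, then $|{\mathcal S}^c(X)|\le q+1$, since each of its elements contains $X$ and lies in $Y$ and therefore ${\mathcal S}^c(X)$ is contained in the line ${\mathcal S}(X)\cap{\mathcal T}(Y)$; moreover, if ${\mathcal S}^c(X)$ spans $V$ it is contained in no top at all, because a top spans a $(k+1)$-dimensional subspace and $k+1<n$. So the proposition reduces to comparing $|{\mathcal S}^c(X)|$ with $q+1$.

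For (i) and the ``if'' part of (ii): if $c(X)=0$, then ${\mathcal S}^c(X)={\mathcal S}(X)$ is already a maximal clique of $\Gamma_{k}(V)$, and $|{\mathcal S}^c(X)|=[n-k+1]_{q}>q+1$. If $c(X)\ge 1$, then by \eqref{eq-star} $|{\mathcal S}^c(X)|$ is strictly decreasing in $c(X)$, hence at least $(q-1)^{n-k}$; an elementary estimate gives $(q-1)^{n-k}\ge q+1$ for $q\ge 3$, with equality only when $q=3$ and $n=k+2$, and $|{\mathcal S}^c(X)|=2^{\,n-k+1-c(X)}\ge 4>q+1$ for $q=2$ with $c(X)\le n-k-1$. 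Thus, under \eqref{eq-th}, one always has $|{\mathcal S}^c(X)|\ge q+1$, with equality only in the sporadic configuration $q=3$, $n=k+2$, $c(X)=n-k+1=3$. When the inequality is strict, the second remark rules out tops and the first remark makes ${\mathcal S}^c(X)$ maximal. In the sporadic configuration the $q+1=4$ non-degenerate codes containing $X$ correspond, in $V/X\cong{\mathbb F}_{3}^{3}$, to the $1$-dimensional subspaces $\langle(a,b,c)\rangle$ with $a,b,c\neq 0$; these span $V/X$, so ${\mathcal S}^c(X)$ spans $V$, is contained in no top, and is again maximal.

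For the ``only if'' part of (ii) (so $q=2$), I show that ${\mathcal S}^c(X)$ is not maximal once $c(X)\ge n-k$. If $c(X)=n-k+1$, then $|{\mathcal S}^c(X)|=1$ by \eqref{eq-star}, and a one-vertex set is not a maximal clique since $\Gamma(n,k)_{q}$ is connected \cite{KP1} with more than one vertex. If $c(X)=n-k$, then $|{\mathcal S}^c(X)|=2$, say ${\mathcal S}^c(X)=\{S_{1},S_{2}\}$, and $Y:=S_{1}+S_{2}$ is a non-degenerate $(k+1)$-dimensional subspace (otherwise $S_{1}$ would lie in a coordinate hyperplane). Every degenerate $k$-dimensional subspace of $Y$ has the form $Y\cap C_{i}$; among those, at most one contains $X$ (all such lie on the line ${\mathcal S}(X)\cap{\mathcal T}(Y)$, which has $q+1=3$ elements, two of which are $S_{1},S_{2}$), and at most $n-c(X)=k$ do not contain $X$ (one for each coordinate hyperplane missing $X$). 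Hence $Y$ has at least $[k+1]_{2}-(k+1)\ge 4$ non-degenerate $k$-dimensional subspaces, so some such $S_{3}$ differs from $S_{1}$ and $S_{2}$; being contained in $Y$ it is adjacent to both, so $\{S_{1},S_{2},S_{3}\}$ is a larger clique of $\Gamma(n,k)_{q}$ and ${\mathcal S}^c(X)$ is not maximal.

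Finally, for (iii): if ${\mathcal S}^c(X)$ is a maximal clique, then by (i)--(ii) $|{\mathcal S}^c(X)|\ge q+1$, with equality only in the sporadic configuration $q=3$, $n=k+2$, $c(X)=3$. If we had ${\mathcal S}^c(X)={\mathcal T}^c(Y)$ for some $Y\in{\mathcal G}_{k+1}(V)$, then ${\mathcal S}^c(X)\subset{\mathcal T}(Y)$, so by the second remark $|{\mathcal S}^c(X)|\le q+1$; hence $|{\mathcal S}^c(X)|=q+1$, we are in the sporadic configuration, and there ${\mathcal S}^c(X)$ spans $V$ and is therefore contained in no top --- a contradiction. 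The step I expect to be the main obstacle is the borderline regime where $|{\mathcal S}^c(X)|$ only just exceeds $q+1$: this is exactly where the $q=2$ exception in (ii) originates and where the sporadic configuration $q=3$, $n=k+2$ appears. The two steps needing genuine care are the treatment of the two-element cliques in the ``only if'' part of (ii) --- in particular, checking that the enclosing top ${\mathcal T}(Y)$ really contains a third non-degenerate code --- and disposing of the sporadic configuration via the spanning argument; the rest is bookkeeping with stars, tops, and lines.
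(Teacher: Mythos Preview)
Your proof is correct, but it takes a somewhat different route from the paper's.

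For (i) and (ii) the paper simply cites Propositions~1 and~2 of \cite{KP2}, whereas you give a self-contained argument using the size formula \eqref{eq-star} and the star/top/line structure of $\Gamma_{k}(V)$. Your ``only if'' argument for (ii) (constructing a third non-degenerate element inside the top ${\mathcal T}(S_{1}+S_{2})$ by bounding the number of degenerate hyperplanes of $Y$) is more explicit than what the paper records here, since the paper defers this to \cite{KP2}.

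For (iii) the two arguments diverge in an instructive way. You bound $|{\mathcal S}^{c}(X)|\le q+1$ from containment in a line and are then forced to analyse the sporadic equality case $q=3$, $n=k+2$, $c(X)=3$, which you dispose of by showing that the four non-degenerate extensions span $V$. The paper avoids this detour entirely: once $c(X)\ge 1$ it picks a coordinate hyperplane $C_{i}\supset X$ and observes that $Y\cap C_{i}$ is a \emph{degenerate} element of the line ${\mathcal S}(X)\cap{\mathcal T}(Y)$, so in fact $|{\mathcal S}^{c}(X)|\le q$, which immediately contradicts $|{\mathcal S}^{c}(X)|\ge q+1$. Thus the paper's trick buys a one-line finish where you need the extra spanning computation; on the other hand, your argument is more uniform (it never splits on whether $c(X)=0$) and makes the borderline behaviour at $q=3$ visible.
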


\begin{proof}
The statements (i) and (ii) are Propositions 1 and 2 from \cite{KP2}.

(iii). If $X$ belongs to ${\mathcal C}(n,k-1)_q$, then ${\mathcal S}^c(X)={\mathcal S}(X)$ is a star of $\Gamma_k(V)$,
i.e. it is not contained in a top of $\Gamma_k(V)$.
Consider the case when $X\not\in{\mathcal C}(n,k-1)_q$.

Suppose that $q\ge 3$. By \eqref{eq-star},
$$|{\mathcal S}^c(X)|\ge (q-1)^{n-k}\ge (q-1)^2=q^2-2q+1\ge 3q-2q+1=q+1.$$
If ${\mathcal S}^c(X)={\mathcal T}^c(Y)$ for a certain $(k+1)$-dimensional subspace $Y\subset V$,
then ${\mathcal S}^c(X)$ is contained in the line ${\mathcal S}(X)\cap{\mathcal T}(Y)$.
Since $X\not\in{\mathcal C}(n,k-1)_q$, there is a coordinate hyperplane $C_i$ containing $X$.
The subspace $Y\cap C_i$ is $k$-dimensional (${\mathcal T}^c(Y)$ is empty if $Y\subset C_i$);
this is an element of the line ${\mathcal S}(X)\cap{\mathcal T}(Y)$ which do not belong to ${\mathcal C}(n,k)_q$.
Hence ${\mathcal S}^c(X)={\mathcal T}^c(Y)$ contains not greater than $q$ elements, a contradiction.

If $q=2$, then $c(X)\le n-k-1$ and \eqref{eq-star} shows that 
$$|{\mathcal S}^c(X)|\ge q^2=4>3=q+1$$
which means that ${\mathcal S}^c(X)$ is not contained in a line of ${\mathcal G}_k(V)$.
\end{proof}

We say that ${\mathcal S}^c(X)$ is a {\it star} of $\Gamma(n,k)_q$ 
only in the case when it is a maximal clique of $\Gamma(n,k)_q$. 
If $X$ belongs to ${\mathcal C}(n,k-1)_q$, then ${\mathcal S}^c(X)={\mathcal S}(X)$
is said to be a {\it maximal star} of $\Gamma(n,k)_q$.

\begin{rem}\label{rem-q=2}{\rm
If $q=2$ and ${\mathcal S}^c(X)$ is not a maximal clique of $\Gamma(n,k)_q$,
then one of the following possibilities is realized: 
\begin{enumerate}
\item[$\bullet$] $c(X)=n-k$ and ${\mathcal S}^c(X)$ contains  precisely two elements; 
\item[$\bullet$] $c(X)=n-k+1$ and ${\mathcal S}^c(X)$ consists of a unique element.
\end{enumerate}
See the proof of Proposition 2 from \cite{KP2} for the details.
}\end{rem}

Let $Y$ be a $(k+1)$-dimensional subspace of $V$. 
Then ${\mathcal T}^c(Y)$ is empty if $Y$ does not belong to ${\mathcal C}(n,k+1)_q$.
Consider the case when $Y\in {\mathcal C}(n,k+1)_q$.
For every $i\in\{1,\dots,n\}$ the $k$-dimensional subspace $Y\cap C_i$ does not belong to ${\mathcal T}^c(Y)$.
Some of the subspaces
$$Y\cap C_1,\dots,Y\cap C_n$$
may be coincident and we denote by $n(Y)$ the number of distinct elements in this collection.
Then
$$k+1\le n(Y)\le n$$
(the first inequality follows from the fact that $Y^{*}$ is spanned by the restrictions of the coordinate functionals to $Y$)
and ${\mathcal T}^c(Y)$ contains precisely
$$[k+1]_q-n(Y)$$
elements.
We say that ${\mathcal T}^c(Y)$ is a {\it top} of $\Gamma(n,k)_q$  if it is a maximal clique of $\Gamma(n,k)_q$. 
In this case, there is no $(k-1)$-dimensional subspace $X\subset V$ such that  ${\mathcal T}^c(Y)={\mathcal S}^{c}(X)$
(the statement (iii) of Proposition \ref{prop-star}).

\begin{prop}\label{prop-top}
The following two conditions are equivalent:
\begin{enumerate}
\item[$\bullet$] ${\mathcal T}^c(Y)$ is a top of $\Gamma(n,k)_q$ for every $Y\in {\mathcal C}(n,k+1)_q$;
\item[$\bullet$] $[k+1]_q-(q+1)>n$.
\end{enumerate}
\end{prop}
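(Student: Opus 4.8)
The plan is to prove the two implications separately, using only two ingredients recalled in the preliminaries: the identity $|{\mathcal T}^c(Y)|=[k+1]_q-n(Y)$ with $k+1\le n(Y)\le n$, and the description of the maximal cliques of $\Gamma_k(V)$ as stars and tops, together with the facts that a star and a top meet in at most a line ($q+1$ elements) and two distinct tops meet in at most one element. The observation that links the two directions is this: if ${\mathcal K}$ is a clique of $\Gamma(n,k)_q$ with ${\mathcal T}^c(Y)\subseteq{\mathcal K}$, then, since $\Gamma(n,k)_q$ is the restriction of $\Gamma_k(V)$ to ${\mathcal C}(n,k)_q$, the set ${\mathcal K}$ is a clique of $\Gamma_k(V)$ contained in a maximal clique ${\mathcal M}$; if ${\mathcal M}$ is a top ${\mathcal T}(Y')$ with $Y'\ne Y$ then ${\mathcal T}^c(Y)\subseteq{\mathcal T}(Y)\cap{\mathcal T}(Y')$ has at most one element, if ${\mathcal M}={\mathcal T}(Y)$ then ${\mathcal K}\subseteq{\mathcal T}(Y)\cap{\mathcal C}(n,k)_q={\mathcal T}^c(Y)$, and if ${\mathcal M}={\mathcal S}(X')$ is a star then ${\mathcal T}^c(Y)\subseteq{\mathcal S}(X')\cap{\mathcal T}(Y)$ is contained in a line so $|{\mathcal T}^c(Y)|\le q+1$. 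Hence as soon as $|{\mathcal T}^c(Y)|>q+1$, the only clique of $\Gamma(n,k)_q$ containing ${\mathcal T}^c(Y)$ is ${\mathcal T}^c(Y)$ itself, so ${\mathcal T}^c(Y)$ is a top.

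For the direction ``$[k+1]_q-(q+1)>n\Rightarrow$ every ${\mathcal T}^c(Y)$ is a top'' I would only observe that $n(Y)\le n$ gives $|{\mathcal T}^c(Y)|=[k+1]_q-n(Y)\ge[k+1]_q-n>q+1$ for every $Y\in{\mathcal C}(n,k+1)_q$, and apply the observation above. This half is routine.

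The substance is the converse. Assuming $[k+1]_q-(q+1)\le n$, I would construct $Y\in{\mathcal C}(n,k+1)_q$ for which ${\mathcal T}^c(Y)$ is strictly contained in a star, hence is not a maximal clique, hence not a top. Fix a $(k+1)$-dimensional ${\mathbb F}$-vector space $U$, a $(k-1)$-dimensional subspace $X_0\subset U$, and the pencil $\ell$ of hyperplanes of $U$ through $X_0$; viewing the hyperplanes of $U$ as the points of the projective space ${\mathbb P}(U^*)$, the set $\ell$ is a line, and $|{\mathbb P}(U^*)\setminus\ell|=[k+1]_q-(q+1)\le n$. Since $k\ge 2$, this number exceeds the number $[k]_q$ of points of any hyperplane of ${\mathbb P}(U^*)$ (because $q^k-q-1>0$ for $q\ge 2$, $k\ge 2$), so ${\mathbb P}(U^*)\setminus\ell$ is not contained in a hyperplane of ${\mathbb P}(U^*)$, i.e.\ the corresponding functionals span $U^*$. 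Choose nonzero $f_1,\dots,f_n\in U^*$ whose projective classes cover ${\mathbb P}(U^*)\setminus\ell$ exactly, repeating classes off $\ell$ when $n>[k+1]_q-(q+1)$; then $\{f_i\}$ spans $U^*$, the map $u\mapsto(f_1(u),\dots,f_n(u))$ embeds $U$ into ${\mathbb F}^n$, and I take $Y$ to be its image, identified with $U$. Each $f_i\ne 0$ gives $Y\not\subseteq C_i$, so $Y\in{\mathcal C}(n,k+1)_q$; and $Y\cap C_i=\ker f_i$ is the hyperplane of $Y$ with class $[f_i]\notin\ell$, so the hyperplanes of $Y$ that are not among the $Y\cap C_i$ are precisely those containing $X_0$, i.e.\ ${\mathcal T}^c(Y)=\{Z:X_0\subset Z\subset Y,\ \dim Z=k\}$. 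No $C_i$ contains $X_0$ (that would put $[f_i]$ in $\ell$), so $X_0\in{\mathcal C}(n,k-1)_q$; hence ${\mathcal S}(X_0)={\mathcal S}^c(X_0)\subseteq{\mathcal C}(n,k)_q$ is a clique of $\Gamma(n,k)_q$ containing ${\mathcal T}^c(Y)$, and it is strictly larger since it contains $X_0+\langle v\rangle$ for any $v\in V\setminus Y$ (which exists as $k+1<n$), a $k$-dimensional subspace not lying in $Y$. Therefore ${\mathcal T}^c(Y)$ is not a maximal clique, so not a top.

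The main obstacle is precisely this construction: one must arrange simultaneously that the $n$ coordinate hyperplanes of $Y$ exhaust all hyperplanes of $Y$ outside a single pencil, that $Y$ itself be non-degenerate, and that the axis $X_0$ of that pencil be non-degenerate, while also confirming $\dim Y=k+1$ — the last being exactly the spanning statement for ${\mathbb P}(U^*)\setminus\ell$. Assigning every surplus coordinate functional to a class off $\ell$ takes care of all of this at once, but the underlying inequality $q^k>q+1$ and the standing hypothesis $1<k<n-1$ (used again to produce $v\in V\setminus Y$) have to be checked. By contrast, the forward direction needs nothing beyond the line/top dichotomy for cliques of the Grassmann graph.
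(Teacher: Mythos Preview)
The paper does not supply its own proof of this proposition; it simply cites \cite[Corollary~1]{KP2}. Your argument is a correct, self-contained proof of both implications.

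Your forward direction is the expected one: once $|{\mathcal T}^c(Y)|>q+1$, any clique of $\Gamma(n,k)_q$ containing ${\mathcal T}^c(Y)$ lies inside a maximal clique of $\Gamma_k(V)$, and the size bound forces that maximal clique to be ${\mathcal T}(Y)$ itself, so the clique is contained in ${\mathcal T}^c(Y)$. For the converse you give an explicit dual construction: realise a $(k+1)$-space $Y$ by choosing the $n$ coordinate functionals so that their kernels exhaust all hyperplanes of $Y$ except those through a fixed $(k-1)$-subspace $X_0$, forcing ${\mathcal T}^c(Y)$ to equal the line ${\mathcal S}(X_0)\cap{\mathcal T}(Y)$ and hence to sit strictly inside the maximal star ${\mathcal S}(X_0)={\mathcal S}^c(X_0)$. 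The two points that need checking---that the functionals with classes in ${\mathbb P}(U^*)\setminus\ell$ span $U^*$ (handled by $q^k>q+1$ for $k\ge 2$), and that some $v\in V\setminus Y$ exists (guaranteed by $k+1<n$, which follows from the standing hypothesis $k<n-1$)---are both in order. Your construction in fact reproduces exactly the behaviour described in the remark following the proposition, which the paper likewise attributes to \cite{KP2}.
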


\begin{proof}
See \cite[Corollary 1]{KP2}.
\end{proof}
Since
$$[k+1]_q-(q+1)=q^{k}+\dots+q^2\ge q^k\ge 2k,$$
the second condition from Proposition \ref{prop-top} holds, for example, if $2k>n$.

\begin{rem}{\rm
In the case when the second condition from Proposition \ref{prop-top} fails,
there are $Y\in {\mathcal C}(n,k+1)_q$ such that ${\mathcal T}^c(Y)$ is a proper subset in a certain star of $\Gamma(n,k)_q$
or empty \cite[Propositions 4 and 5]{KP2}.
}\end{rem}

\section{General lemmas}

Let $f$ be an isomorphism of $\Gamma(n,k)_{q}$ to a certain subgraph $\Gamma$ of $\Gamma_{k}(V)$
and let $1<k<n-1$.
Then $f$ is an  adjacency preserving injection of ${\mathcal C}(n,k)_q$ to ${\mathcal G}_{k}(V)$, i.e.
if $X,Y\in {\mathcal C}(n,k)_q$ are adjacent, then $f(X)$ and $f(Y)$ are adjacent.
It is clear that $f$ transfers maximal cliques of $\Gamma(n,k)_{q}$ to maximal cliques of $\Gamma$.
Every maximal clique of $\Gamma$ is a (not necessarily maximal) clique of $\Gamma_{k}(V)$, i.e. it is a subset of a star or a top of $\Gamma_{k}(V)$. 
It must be pointed out that $f$ need not to be adjacency preserving in both directions: 
$f(X)$ and $f(Y)$ may be adjacent for non-adjecent $X,Y\in {\mathcal C}(n,k)_q$
(two adjacent $k$-dimensional subspaces of $V$ are not necessarily  connected by an edge of $\Gamma$).
For this reason the intersection of a maximal clique of $\Gamma_k(V)$ with the vertex set of $\Gamma$ need not to be a clique of $\Gamma$
and we cannot state that $f$ sends distinct maximal cliques of $\Gamma(n,k)_{q}$ to subsets of distinct maximal cliques of $\Gamma_{k}(V)$.

\begin{lemma}\label{lemma-g1}
One of the following possibilities is realized:
\begin{enumerate}
\item[$\bullet$] $f$ sends every maximal star of $\Gamma(n,k)_{q}$ to a star of $\Gamma_{k}(V)$;
\item[$\bullet$] $n=2k$ and $f$ transfers all maximal stars of $\Gamma(n,k)_{q}$ to tops of  $\Gamma_{k}(V)$.
\end{enumerate}
\end{lemma}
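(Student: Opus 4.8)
The plan is to determine, for each maximal star of $\Gamma(n,k)_q$, the maximal clique of $\Gamma_k(V)$ that receives its image, and then to show that the type of that clique does not depend on the maximal star.

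\emph{Step 1: a cardinality bound.} Let $X\in{\mathcal C}(n,k-1)_q$, so that ${\mathcal S}(X)$ is a maximal star; since $1<k<n-1$ we have $n\ge k+2$ and $|{\mathcal S}(X)|=[n-k+1]_q\ge q^2+q+1>q+1$. As $f$ is injective, $f({\mathcal S}(X))$ is a clique of $\Gamma_k(V)$ of the same cardinality, and a clique of $\Gamma_k(V)$ with more than $q+1$ elements lies in a \emph{unique} maximal clique of $\Gamma_k(V)$: two distinct maximal cliques of the same type meet in at most one element, and a star meets a top in a line, i.e. in $q+1$ elements. Call it ${\mathcal M}(X)$. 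If ${\mathcal M}(X)$ is a star, then $|{\mathcal M}(X)|=[n-k+1]_q=|f({\mathcal S}(X))|$ forces $f({\mathcal S}(X))={\mathcal M}(X)$; if ${\mathcal M}(X)$ is a top, then $[n-k+1]_q=|f({\mathcal S}(X))|\le|{\mathcal M}(X)|=[k+1]_q$, hence $n\le 2k$. So it remains to prove that the type of ${\mathcal M}(X)$ is the same for all maximal stars, and that in the ``top'' case $n=2k$.

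\emph{Step 2: the type is locally constant.} Call two maximal stars ${\mathcal S}(X_1),{\mathcal S}(X_2)$ neighbouring if $X_1,X_2$ are adjacent; then $Z:=X_1+X_2$ is $k$-dimensional, non-degenerate (it contains $X_1$), and ${\mathcal S}(X_1)\cap{\mathcal S}(X_2)=\{Z\}$. For any $(k+1)$-dimensional $Y\supset Z$ (automatically non-degenerate) the lines $\ell_i:={\mathcal S}(X_i)\cap{\mathcal T}(Y)$ consist of non-degenerate codes and satisfy $\ell_1\cap\ell_2=\{Z\}$ and $\ell_1\cup\ell_2\subset{\mathcal T}^c(Y)$; thus $\ell_1\cup\ell_2$ is a clique of $\Gamma(n,k)_q$ with $2q+1>q+1$ elements, and $f(\ell_1\cup\ell_2)$ lies in a unique maximal clique ${\mathcal N}$ of $\Gamma_k(V)$ (depending on $Y$). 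Since $f(\ell_i)\subset{\mathcal M}(X_i)\cap{\mathcal N}$ and $|f(\ell_i)|=q+1\ge 2$, either ${\mathcal M}(X_i)={\mathcal N}$, or ${\mathcal M}(X_i)$ and ${\mathcal N}$ have different types and $f(\ell_i)={\mathcal M}(X_i)\cap{\mathcal N}$ is a line. A short analysis of these possibilities, carried out for all $Y\supset Z$ and combined with $f(Z)\in f(\ell_1)\cap f(\ell_2)$ and the cardinality facts of Step 1, shows that ${\mathcal M}(X_1)$ and ${\mathcal M}(X_2)$ are of the same type. For instance, if ${\mathcal M}(X_1)$ were a star and ${\mathcal M}(X_2)$ a top, one first deduces (using that $\ell_2\not\subset{\mathcal S}(X_1)$) that ${\mathcal N}$ cannot be a star, hence ${\mathcal N}={\mathcal M}(X_2)$ for every $Y\supset Z$; but then $f({\mathcal S}(X_1))$, being the union of the $f(\ell_1)$ over all such $Y$, lies in the top ${\mathcal M}(X_2)$, contradicting ${\mathcal M}(X_1)$ being a star. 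I expect this case analysis to be the main technical obstacle.

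\emph{Step 3: globalisation and the case $n=2k$.} Up to relabelling, the ``neighbouring'' relation on maximal stars is the adjacency of $\Gamma(n,k-1)_q$, which is connected (for $k\ge 3$ by \cite{KP1}, and for $k=2$ it is complete); hence by Step 2 the type of ${\mathcal M}(X)$ is one and the same for all maximal stars. If it is ``star'', then by Step 1 $f$ maps every maximal star onto a star of $\Gamma_k(V)$, and we are in the first alternative. Suppose it is ``top''; then $n\le 2k$ by Step 1, and it remains to rule out $n<2k$. For this I would again bridge, this time the lines of a single maximal star ${\mathcal S}(X)$ with the tops ${\mathcal T}^c(Y)$, $Y\supset X$: this forces $f$ restricted to ${\mathcal S}(X)$ to respect enough of the line structure that $f({\mathcal S}(X))$ is (essentially) the image of the projective space ${\mathcal S}(X)\cong{\mathcal G}_1(V/X)$ of dimension $n-k$ under an embedding into the top ${\mathcal M}(X)$, a projective space of dimension $k$; hence $n-k\le k$. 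Equality is then obtained from a global count: using injectivity of $f$ on the union of all maximal stars together with a bound on how many such embedded copies a single top of $\Gamma_k(V)$ can hold, and comparing with the number $|{\mathcal C}(n,k-1)_q|$ of maximal stars, the strict inequality $n-k<k$ becomes impossible. (Equivalently, composing $f$ with the orthocomplementary isomorphism $\Gamma_k(V)\to\Gamma_{n-k}(V)$ turns the ``top'' behaviour into ``star'' behaviour for a map into $\Gamma_{n-k}(V)$, and the dimension comparison yields $n-k=k$.) Thus $n=2k$, and we are in the second alternative. Besides the case analysis of Step 2, this last step of excluding $n<2k$ is the delicate point of the proof.
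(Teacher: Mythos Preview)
Your Steps~1 and~2, together with the connectedness argument in Step~3, are correct and match the paper's treatment of the cases $2k<n$ and $n=2k$ (the paper uses the clique ${\mathcal T}^c(Z)$ for a $(k+1)$-dimensional $Z\supset X_1+X_2$ instead of your $\ell_1\cup\ell_2$, but the logic is the same).

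The genuine gap is your exclusion of $n<2k$ in the ``top'' case. Neither of your two sketches works. The first (``the image of the projective space $\dots$ under an embedding into the top $\dots$ hence $n-k\le k$'') only recovers the bound $n\le 2k$ already obtained in Step~1. The second (``global count'') fails because for $2k>n$ there is no injectivity of the assignment $X\mapsto {\mathcal M}(X)$: two adjacent maximal stars ${\mathcal S}(X_1),{\mathcal S}(X_2)$ together contain $2[n-k+1]_q-1$ elements, and for $2k>n$ this is at most $[k+1]_q$, so both can sit in a single top of $\Gamma_k(V)$; hence no counting contradiction follows. The orthocomplementary trick in your parenthetical does not help either: composing with $\perp$ produces a map into $\Gamma_{n-k}(V)$ whose stars have $[k+1]_q$ elements, strictly more than $[n-k+1]_q$, so nothing forces equality.

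The paper handles $2k>n$ by a different route. Since then $[k+1]_q-(q+1)>n$, every ${\mathcal T}^c(Y)$ with $Y\in{\mathcal C}(n,k+1)_q$ is a top of $\Gamma(n,k)_q$, and moreover $|{\mathcal T}^c(Y)|\ge [k+1]_q-n>[n-k+1]_q$, so $f({\mathcal T}^c(Y))$ cannot fit in a star of $\Gamma_k(V)$ and hence lies in a top. Now fix $X\in{\mathcal C}(n,k-1)_q$ and suppose $f({\mathcal S}(X))$ were contained in a top ${\mathcal T}(Z)$. For any $(k+1)$-dimensional $Y\supset X$ the line ${\mathcal S}(X)\cap{\mathcal T}(Y)\subset{\mathcal T}^c(Y)$ forces $f({\mathcal T}^c(Y))\subset{\mathcal T}(Z)$ as well; taking two distinct such $Y,Y'$ one computes
\[
|{\mathcal T}^c(Y)\cup{\mathcal T}^c(Y')|\ge 2[k+1]_q-2n-1>[k+1]_q=|{\mathcal T}(Z)|,
\]
a contradiction. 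Thus $f({\mathcal S}(X))$ is a star, and the ``top'' alternative simply does not occur when $2k>n$. You should replace your Step~3 counting sketch by this argument (or something equivalent); without it the proof is incomplete.
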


\begin{proof}
(1). If $2k<n$, then the number of elements in a star of $\Gamma_{k}(V)$ and, consequently, in a maximal star of $\Gamma(n,k)_{q}$ is greater than
the number of elements in a top of $\Gamma_{k}(V)$. 
Therefore, $f$ transfers all maximal stars of $\Gamma(n,k)_{q}$ to stars of $\Gamma_{k}(V)$. 

(2). In the case when $n=2k$, stars and tops of $\Gamma_{k}(V)$ have the same number of elements.
Thus, $f$ transfers every maximal star of $\Gamma(n,k)_q$ to a star or a top of $\Gamma_{k}(V)$.
Suppose that $X\in {\mathcal C}(n,k-1)_q$ and $f$ sends ${\mathcal S}(X)$ to a certain top of $\Gamma_{k}(V)$. 
Note that ${\mathcal C}(n,k-1)_q$ consists of one element if $q=2$ and $n=2k=4$
(the $1$-dimensional subspace containing the vector $(1,\dots,1)$); for this case the statement is trivial. 

In the general case, for any $Y\in {\mathcal C}(n,k-1)_q$ adjacent to $X$ we take a $(k+1)$-dimensional subspace $Z$ containing $X+Y$. 
Then $Z\in {\mathcal C}(n,k+1)_q$ and ${\mathcal T}^c(Z)$ contains the lines 
$${\mathcal S}(X)\cap{\mathcal T}(Z)\;\mbox{ and }\; {\mathcal S}(Y)\cap{\mathcal T}(Z)$$
which means that ${\mathcal T}^c(Z)$ is a top of $\Gamma(n,k)_{q}$.
Let ${\mathcal X}$ be a maximal clique of $\Gamma_{k}(V)$ containing $f({\mathcal T}^c(Z))$.
It is clear that $f({\mathcal S}(X))$ and ${\mathcal X}$ are distinct.
The intersection of ${\mathcal S}(X)$ and ${\mathcal T}^c(Z)$ contains more that one element and, consequently,
$$|f({\mathcal S}(X))\cap {\mathcal X}|>1.$$ 
Since $f({\mathcal S}(X))$ is a top of $\Gamma_{k}(V)$, the latter implies that ${\mathcal X}$ is a star of $\Gamma_{k}(V)$.
The intersection of ${\mathcal S}(Y)$ and ${\mathcal T}^c(Z)$ contains more than one element, i.e.
$f({\mathcal S}(Y))$ and ${\mathcal X}$ are distinct maximal cliques of $\Gamma_{k}(V)$
whose intersection contains more than one element.
Since ${\mathcal X}$ is a star of $\Gamma_{k}(V)$, $f({\mathcal S}(Y))$ is a top of $\Gamma_{k}(V)$.

So, $f({\mathcal S}(Y))$ is a top of $\Gamma_{k}(V)$ for every $Y\in {\mathcal C}(n,k-1)_q$ adjacent to $X$.
Using the connectedness of $\Gamma(n,k-1)_{q}$, we establish that $f$ sends every maximal star of $\Gamma(n,k)_{q}$ to a top of $\Gamma_{k}(V)$.

(3). Suppose that $2k>n$. In this case, ${\mathcal T}^c(Y)$ is a top of $\Gamma(n,k)_q$ for every $Y\in {\mathcal C}(n,k+1)_q$
(Proposition \ref{prop-top}).
The number of elements in every top of $\Gamma(n,k)_{q}$ is greater than the number of elements in a star of $\Gamma_{k}(V)$.
Indeed, a top of $\Gamma(n,k)_q$ contains not less that $$[k+1]_q-n$$ elements,
a star of $\Gamma_k(V)$ contains precisely $$[n-k+1]_q$$ elements and
$$[k+1]_q-n-[n-k+1]_q=\frac{q^{k+1}-q^{n-k+1}}{q-1}-n=\frac{q^{n-k+1}(q^{2k-n}-1)}{q-1}-n=$$
$$q^{n-k+1}(q^{2k-n-1}+\dots+q+1)-n\ge q^{k}-n\ge 2k-n>0.$$
Therefore, $f$ transfers tops of $\Gamma(n,k)_{q}$ to subsets in tops of $\Gamma_{k}(V)$.

Let $X\in {\mathcal C}(n,k-1)_q$.
We take any $(k+1)$-dimensional subspace $Y\subset V$ containing $X$.
Then $Y$ belongs to ${\mathcal C}(n,k+1)_q$ and
$$|{\mathcal S}(X)\cap {\mathcal T}^c(Y)|>1.$$
Since $f({\mathcal T}^c(Y))$ is contained in a top of $\Gamma_{k}(V)$,
for $f({\mathcal S}(X))$ one of the following possibilities is realized:
\begin{enumerate}
\item[(a)] $f({\mathcal S}(X))$ is a star of $\Gamma_{k}(V)$;
\item[(b)] $f({\mathcal S}(X))$ and $f({\mathcal T}^c(Y))$ both are contained in a certain top of $\Gamma_{k}(V)$.
\end{enumerate}
We need to show that (b) is impossible. 
Suppose that $f({\mathcal S}(X))$ and $f({\mathcal T}^c(Y))$ are contained in ${\mathcal T}(Z)$
for a certain $(k+1)$-dimensional subspace $Z\subset V$.
For any $Y'\in {\mathcal C}(n,k+1)_q$ containing $X$ and distinct from $Y$ we have
 $$|{\mathcal S}(X)\cap {\mathcal T}^c(Y')|>1$$
which implies that  a top  of $\Gamma_{k}(V)$ containing $f({\mathcal T}^c(Y'))$ 
intersects ${\mathcal T}(Z)$ in a subset containing more than one element and, consequently, 
$$f({\mathcal T}^c(Y'))\subset {\mathcal T}(Z).$$
Since each of ${\mathcal T}^c(Y),{\mathcal T}^c(Y')$ contains not less than
$[k+1]_q-n$ elements and their intersection is one-element,
$$|{\mathcal T}^c(Y)\cup {\mathcal T}^c(Y')|\ge 2[k+1]_q -2n-1=[k+1]_q +(q^k+\dots+q+1) -2n-1.$$
We have $2k>n$ and $1<k<n-1$ which implies that $k\ge 3$.
Therefore, 
$$|{\mathcal T}^c(Y)\cup {\mathcal T}^c(Y')|\ge [k+1]_q +q^k+q^{k-1}+q^{k-2}+q^{k-3}-2n-1\ge$$
$$[k+1]_q +2k+2(k-1)+2(k-2)+2(k-3)-2n-1=[k+1]_q +8k-2n-13\ge$$
$$[k+1]_q +8k-2(2k-1)-13=[k+1]_q +4k-11>[k+1]_q=|{\mathcal T}(Z)|$$
which means that the case (b) is not realized. 
\end{proof}

In the case when the second possibility from Lemma \ref{lemma-g1} is realized, 
we consider the composition of the orthocomplementary map and $f$.
This is an isomorphism of $\Gamma(n,k)_q$ to a subgraph of $\Gamma_{k}(V)$
sending every maximal star of $\Gamma(n,k)_q$ to a star of $\Gamma_{k}(V)$.
If this map is extendable to an automorphism of $\Gamma_{k}(V)$,
then the same holds for $f$.

From this moment, we assume that $f$ transfers every maximal star of $\Gamma(n,k)_q$ to a star of $\Gamma_{k}(V)$.
Then $f$ induces an injective map 
$$f_{k-1}:{\mathcal C}(n,k-1)_q\to {\mathcal G}_{k-1}(V)$$
such that 
$$f({\mathcal S}(X))={\mathcal S}(f_{k-1}(X))$$
for every $X\in {\mathcal C}(n,k-1)_q$.
Denote by $\Gamma_{k-1}$ the restriction of the Grassmann graph $\Gamma_{k-1}(V)$ to the image of $f_{k-1}$. 
Since the intersection of two distinct stars 
$${\mathcal S}(X),{\mathcal S}(Y),\;\;X,Y\in {\mathcal G}_{k-1}(V)$$ 
is non-empty if and only if $X,Y$ are adjacent, 
the map $f_{k-1}$ is an isomorphism of $\Gamma(n,k-1)_q$ to $\Gamma_{k-1}$.

Now, we show that $f_{k-1}$ sends every maximal star of $\Gamma(n,k-1)_q$ to a star of $\Gamma_{k-1}(V)$ if $k\ge 3$.
By Lemma \ref{lemma-g1}, we only need to consider the case when $n=2(k-1)$.
Let us take a $k$-dimensional subspace $Y\subset V$ containing the $1$-dimensional subspaces 
$$Q_1={\mathbb F}(1,\dots,1),\; Q_2={\mathbb F}(1,0,1,\dots,1),\; Q_3={\mathbb F}(0,1,1,\dots,1).$$
For any $(k-3)$-dimensional  subspace $Z\subset Y$ satisfying
$$Z\cap(Q_1+Q_2+Q_3)=0$$
the subspaces 
$$Z+Q_1+Q_2, \;Z+Q_1+Q_3,\;Z+Q_2+Q_3$$
form a clique in $\Gamma(n,k-1)_q$ which is not contained in a star. 
Therefore, ${\mathcal T}^c(Y)$ is a top of $\Gamma(n,k-1)_q$. 
If $M\in {\mathcal T}^c(Y)$, then 
$$Y\in {\mathcal S}(M)\;\mbox{ and }\;f(Y)\in {\mathcal S}(f_{k-1}(M))$$
which implies that $f_{k-1}(M)\subset f(Y)$, i.e.
$$f_{k-1}({\mathcal T}^{c}(Y))\subset {\mathcal T}(f(Y)).$$
If a $(k-2)$-dimensional subspace $X\subset Y$ contains $Q_1$, 
then $X\in {\mathcal C}(n,k-2)_q$ and
$$|{\mathcal S}(X)\cap {\mathcal T}^c(Y)|>1;$$
consequently, 
$$|f_{k-1}({\mathcal S}(X))\cap {\mathcal T}(f(Y))|>1.$$
Since $f_{k-1}({\mathcal S}(X))$ and ${\mathcal T}(f(Y))$ are distinct maximal cliques of $\Gamma_{k-1}(V)$ 
and  the intersection of two distinct tops of $\Gamma_{k-1}(V)$ contains at most one element,
$f_{k-1}({\mathcal S}(X))$ is a star of $\Gamma_{k-1}(V)$.

So, if $k\ge 3$, then $f_{k-1}$ sends every maximal star of $\Gamma(n,k-1)_q$ to a star of $\Gamma_{k-1}(V)$, 
i.e. it induces an isomorphism $f_{k-2}$ of $\Gamma(n,k-2)_q$ to a certain subgraph $\Gamma_{k-2}$ of $\Gamma_{k-2}(V)$.
We construct recursively a sequence $$f_{k}=f, f_{k-1},\dots,f_1,$$
where 
$$f_{i}:{\mathcal C}(n,i)_q\to {\mathcal G}_{i}(V)$$
is an isomorphism of $\Gamma(n,i)_q$ to a certain subgraph $\Gamma_i$ of $\Gamma_{i}(V)$ and
$$f_i({\mathcal S}(X))={\mathcal S}(f_{i-1}(X))$$
for every $X\in {\mathcal C}(n,i-1)_q$ and $i\ge 2$.

\begin{lemma}\label{lemma-g2}
The isomorphism $f_i$, $i\in \{2,\dots,k\}$ sends every star of $\Gamma(n,i)_q$ to a subset in a star of $\Gamma_i(V)$.
\end{lemma}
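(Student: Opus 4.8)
The plan is to separate the trivial cases from the real content and then pin down where a single ``fat'' clique can be mapped. If $X\in{\mathcal C}(n,i-1)_q$ then ${\mathcal S}^c(X)={\mathcal S}(X)$, and the relation $f_i({\mathcal S}(X))={\mathcal S}(f_{i-1}(X))$ established just above shows ${\mathcal S}^c(X)$ goes to a star; so I would assume $X\notin{\mathcal C}(n,i-1)_q$. Every member of ${\mathcal S}^c(X)$ contains $X$, hence lies in the single star ${\mathcal S}(X)$ of $\Gamma_i(V)$, which I identify with ${\mathbb P}(V/X)$; any two distinct members of ${\mathcal S}^c(X)$ intersect exactly in $X$. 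Since $f_i$ is a graph isomorphism of $\Gamma(n,i)_q$ onto $\Gamma_i$, the set $f_i({\mathcal S}^c(X))$ is a maximal clique of $\Gamma_i$, hence a clique of $\Gamma_i(V)$, hence contained in a star or a top of $\Gamma_i(V)$; if it is contained in a star we are done, so the whole task is to rule out the top.

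The key reformulation I would use: for distinct $C,C'\in{\mathcal S}^c(X)$ the subspaces $f_i(C),f_i(C')$ are adjacent and distinct, so $f_i(C)\cap f_i(C')$ is $(i-1)$-dimensional; if this intersection is the same subspace $P$ for every such pair, then $f_i({\mathcal S}^c(X))\subseteq{\mathcal S}(P)$ and we are done. So suppose not. Using the standard description of common neighbours in $\Gamma_i(V)$ — a common neighbour of two adjacent $i$-subspaces either contains their intersection or is contained in their sum — a non-constant triple yields an $(i+1)$-dimensional subspace $R$ with $f_i(C),f_i(C'),f_i(C'')\subseteq R$ for three of the images, and then the same principle applied to any further element of the clique gives $f_i({\mathcal S}^c(X))\subseteq{\mathcal T}(R)$. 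I also need that ${\mathcal S}^c(X)$ is genuinely fat, i.e.\ not contained in a line of ${\mathbb P}(V/X)$: this follows from \eqref{eq-star}, Proposition \ref{prop-star}(ii) and Remark \ref{rem-q=2} when $q=2$ (a star then has more than $q+1$ elements), and is a short direct check for $q\ge 3$; thus ${\mathcal S}^c(X)$ contains $C_1,C_2,C_3$ with $\dim(C_1+C_2+C_3)=i+2$. With fatness, the common-neighbour computation above also shows that any clique of $\Gamma(n,i)_q$ meeting ${\mathcal S}^c(X)$ in three such non-collinear members is contained in ${\mathcal S}^c(X)$, so ${\mathcal S}^c(X)$ is the unique maximal clique of $\Gamma(n,i)_q$ through them and $R$ (if it existed) would be uniquely determined.

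The heart of the proof is to contradict $f_i({\mathcal S}^c(X))\subseteq{\mathcal T}(R)$. Every non-degenerate $(i-1)$-dimensional subspace $Y$ of every $C\in{\mathcal S}^c(X)$ satisfies $f_i({\mathcal S}(Y))={\mathcal S}(f_{i-1}(Y))$, hence $f_{i-1}(Y)\subset f_i(C)\subset R$; and distinct members of ${\mathcal S}^c(X)$, meeting only in the degenerate subspace $X$, share no non-degenerate $(i-1)$-dimensional subspace, so $f_{i-1}$ maps all these $Y$'s injectively into ${\mathcal G}_{i-1}(R)$. The plan is to push this further: whenever $Y_1,Y_2$ are adjacent $(i-1)$-dimensional codes with $f_{i-1}(Y_1),f_{i-1}(Y_2)\subset R$ and $Y_1+Y_2$ non-degenerate, then $f_i(Y_1+Y_2)=f_{i-1}(Y_1)+f_{i-1}(Y_2)\subset R$, so the non-degenerate $(i-1)$-subspaces of $Y_1+Y_2$ are again sent into $R$; iterating this inside $\Gamma(n,i-1)_q$ produces a set of $(i-1)$-dimensional codes whose $f_{i-1}$-images all lie in the fixed $(i+1)$-dimensional space $R$, yet which is larger than ${\mathcal G}_{i-1}(R)$ (whose size depends only on $i$ and $q$) — the contradiction. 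When $i<k$ there is a clean shortcut with no counting: the relation $f_{i+1}({\mathcal S}(W))={\mathcal S}(f_i(W))$ for $W\in{\mathcal C}(n,i)_q$ forces $f_i(C)+f_i(C')=f_{i+1}(C+C')$ for distinct $C,C'\in{\mathcal S}^c(X)$, so $f_i({\mathcal S}^c(X))\subseteq{\mathcal T}(R)$ would give $f_{i+1}(C+C')=R$ for all pairs, which contradicts injectivity of $f_{i+1}$ because the sums $C+C'$ are not all equal (${\mathcal S}^c(X)$ is fat).

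The hardest point is precisely the iteration/counting step for $i=k$, where the shortcut via $f_{i+1}$ is unavailable: one must ensure the family of $(i-1)$-dimensional codes generated inside $\Gamma(n,i-1)_q$ genuinely overflows ${\mathcal G}_{i-1}(R)$, and in particular one has to handle elements of ${\mathcal S}^c(X)$ carrying few — possibly no — non-degenerate $(i-1)$-dimensional subspaces, which is where a connectivity argument in $\Gamma(n,i-1)_q$ is needed.
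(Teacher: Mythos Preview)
Your shortcut for $i<k$ via $f_{i+1}$ is correct and slick: once ${\mathcal S}^c(X)$ is ``fat'' (which you verify), the identity $f_{i+1}(C+C')=f_i(C)+f_i(C')$ for $C,C'\in{\mathcal S}^c(X)$ indeed forces $f_{i+1}$ to be constant on the distinct sums $C+C'$ if $f_i({\mathcal S}^c(X))$ sat in a top, contradicting injectivity. The paper does not use this; it simply says the general case is ``similar'' to $i=k$, meaning it reruns the same top-counting argument at every level. So your handling of $i<k$ is genuinely different and shorter.

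The gap is at $i=k$, which is exactly the case the paper singles out and proves. Your iteration scheme is well-defined (and $Y_1+Y_2$ is automatically non-degenerate once $Y_1$ is), but you give no argument that the set it produces overflows $|{\mathcal G}_{k-1}(R)|$ or $|{\mathcal G}_k(R)|$. The seed set can be far too small --- for instance $q=2$, $k=3$, $n=5$, $c(X)=1$ gives $|{\mathcal S}^c(X)|=4$ and at most $8$ non-degenerate $2$-subspaces inside them, against $|{\mathcal G}_2(R)|=35$ --- so real iteration is needed, and your closure rule (``if $Y_1,Y_2\in S$ are adjacent, add the non-degenerate hyperplanes of $Y_1+Y_2$'') is \emph{not} closure under adjacency in $\Gamma(n,k-1)_q$, so invoking connectedness of $\Gamma(n,k-1)_q$ does not finish it. You yourself flag this as ``the hardest point''; as written it is not a proof.

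The paper's route for $i=k$ is different and avoids propagation through $f_{k-1}$ altogether. It chooses $Q\in{\mathcal C}(n,1)_q$ and a non-degenerate $(k-1)$-subspace $Y=Q+(\text{a }(k-2)\text{-subspace of }X)$, then picks $X_1,X_2\in{\mathcal S}^c(X)$ so that $Z_j=Q+X_j$ are distinct $(k+1)$-subspaces; each ${\mathcal T}^c(Z_j)$ is shown to be a genuine top of $\Gamma(n,k)_q$ meeting both ${\mathcal S}^c(X)$ and the maximal star ${\mathcal S}(Y)$ in at least two points. Since $f({\mathcal S}(Y))$ is already known to be a star of $\Gamma_k(V)$, the images $f({\mathcal T}^c(Z_j))$ are forced into tops of $\Gamma_k(V)$; hence if $f({\mathcal S}^c(X))$ lay in a top, all three images would lie in one common top ${\mathcal T}(Z)$. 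A direct inclusion--exclusion count (done separately for $q\ge3$ and $q=2$) then gives $|{\mathcal S}^c(X)\cup{\mathcal T}^c(Z_1)\cup{\mathcal T}^c(Z_2)|>[k+1]_q=|{\mathcal T}(Z)|$, a contradiction. The missing idea in your sketch is precisely this: bring in two auxiliary tops tied to a nearby \emph{maximal} star, so that the already-established star behaviour on maximal stars pins the tops, and then count.
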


\begin{proof}
We prove  the statement for $i=k$ (the general case is similar).
Let $X$ be a $(k-1)$-dimensional subspace of $V$ such that ${\mathcal S}^c(X)$ is a non-maximal  star of $\Gamma(n,k)_q$.
Then $X\not\in {\mathcal C}(n,k-1)_q$.
We take $Q\in {\mathcal C}(n,1)_q$ (note that $Q\not\subset X$) and $Y\in {\mathcal C}(n,k-1)_q$
which is the sum of $Q$ and a $(k-2)$-dimensional subspace of $X$.
There are  $X_1,X_2\in {\mathcal S}^c(X)$
such that $$Z_j=Q+X_j,\;\;\;j\in \{1,2\}$$ are distinct $(k+1)$-dimensional subspaces
(otherwise, ${\mathcal S}^c(X)$ is contained in ${\mathcal T}^c(Z)$ for a certain $(k+1)$-dimensional subspace $Z\subset V$ which is impossible).
Observe that every ${\mathcal T}^c(Z_j)$ is a top of $\Gamma(n,k)_q$. 
Indeed, ${\mathcal T}^c(Z_j)$ contains $X_j$, $Q+X$ and $Q+X'$, where $X'$ is a $(k-1)$-dimensional subspace of $X_j$
distinct from $X$; these subspaces form a clique of $\Gamma(n,k)_q$ which is not contained in a star.

Each ${\mathcal T}^c(Z_j)$ intersects the star ${\mathcal S}^c(X)$ and  the maximal star ${\mathcal S}(Y)$
in subsets containing more than one element.
Since $f({\mathcal S}(Y))$ is a star of $\Gamma_k(V)$, every $f({\mathcal T}^c(Z_j))$ is contained in 
a top of $\Gamma_{k}(V)$. 
This means that for $f({\mathcal S}^c(X))$ one of the following possibilities is realized:
\begin{enumerate}
\item[(a)] $f({\mathcal S}^c(X))$ is contained in a star of $\Gamma_k(V)$;
\item[(b)] there is a top of $\Gamma_k(V)$ containing
$f({\mathcal S}^c(X))$ and both $f({\mathcal T}^c(Z_j))$, $j\in \{1,2\}$.
\end{enumerate}
Show that  the case (b) is impossible.

Suppose that $q\ge 3$.
Then
$$|{\mathcal S}^c(X)|\ge (q-1)^{n-k}$$
by \eqref{eq-star} and
$$|{\mathcal T}^c(Z_j)|\ge [k+1]_q-n=q^k+\dots+q+1-n.$$ 
Observe that
$${\mathcal T}^c(Z_1)\cap {\mathcal T}^c(Z_2)={\mathcal S}^c(X)\cap {\mathcal T}^c(Z_1)\cap {\mathcal T}^c(Z_2)=\{X+Y\}$$
and
$$|{\mathcal S}^c(X)\cap{\mathcal T}^c(Z_j)|\le q+1.$$
By  the inclusion-exclusion principle, we have
$$
|{\mathcal S}^c(X)\cup{\mathcal T}^c(Z_1)\cup{\mathcal T}^c(Z_2)|\ge
(q-1)^{n-k}+2(q^k+\dots+q+1-n)-2(q+1)-1+1\ge$$
$$2(n-k)+2(q^k +\dots+q^2)-2n=(q^k+\dots+q+1) +(q^k+\dots+q^2) -q-1-2k>$$
$$q^k+\dots+q+1=[k+1]_q$$
which contradicts (b).

Suppose that $q=2$. 
In this case, ${\mathcal S}^c(X)$ contains not less than $4$ elements (by the statement (ii) from Proposition \ref{prop-star} and  \eqref{eq-star}).
If $j\in \{1,2\}$, then
for every $(k-1)$-dimensional subspace $N\subset X_j$
the subspace $N+Q$ belongs to ${\mathcal T}^c(Z_j)$;
this subspace belongs to ${\mathcal S}^c(X)$ only in the case when $N=X$.
Therefore, each ${\mathcal T}^c(Z_j)$ contains 
$$[k]_2-1=2^{k-1}+\dots+2$$
 elements which do not belong to ${\mathcal S}^c(X)$. 
 This means that
the union of our three maximal cliques contains not less than 
$$4+2(2^{k-1}+\dots+2)>[k+1]_2$$
elements and (b) is impossible.
\end{proof}

\section{Proof of Theorem \ref{main-1}. The case $q\ge 3$}
Suppose that $q\ge 3$.
In this case, ${\mathcal S}^c(X)$ is a star of $\Gamma(n,i)_q$ for every $(i-1)$-dimensional subspace $X\subset V$
(the statement (i) of Proposition \ref{prop-star}). 
By Lemma \ref{lemma-g2}, the isomorphism $f_i$, $i\in \{2,\dots,k\}$ transfers every star of $\Gamma(n,i)_q$ to a subset in a star of $\Gamma_i(V)$.
Since the intersection of two distinct stars of $\Gamma_{i}(V)$ contains at most one element, 
the image of every star of $\Gamma(n,i)_q$ is contained in a unique star of $\Gamma_i(V)$.
Therefore, for every $j\in \{1,\dots,k-1\}$ the isomorphism $f_j$ can be extended to a map $g_j:{\mathcal G}_{j}(V)\to {\mathcal G}_{j}(V)$ 
such that 
$$f_{j+1}({\mathcal S}^c(X))\subset {\mathcal S}(g_j(X))$$
for every $X\in {\mathcal G}_{j}(X)$.
Note that 
$$g_j({\mathcal G}_j(Y))\subset {\mathcal G}_j(f_{j+1}(Y))$$
for every $Y\in {\mathcal C}(n,j+1)_q$.

\begin{lemma}\label{lemma3-1}
The map $g_1$ is bijective.
\end{lemma}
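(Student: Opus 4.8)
The plan is to show that $g_1$ is both injective and surjective on ${\mathcal G}_1(V)$. Injectivity should come for free: $g_1$ extends the injective map $f_1$ on the vertex set ${\mathcal C}(n,1)_q$, and if $X,X'$ are distinct $1$-dimensional subspaces lying in some coordinate hyperplane $C_i$, one can still separate their images by exhibiting non-degenerate $2$-dimensional subspaces containing them whose $f_2$-images force $g_1(X)\ne g_1(X')$. More precisely, since $f_2$ is an isomorphism of $\Gamma(n,2)_q$ onto a subgraph of $\Gamma_2(V)$ and carries each star ${\mathcal S}^c(P)$, $P\in{\mathcal G}_1(V)$, into a unique star ${\mathcal S}(g_1(P))$ of $\Gamma_2(V)$, the equality $g_1(X)=g_1(X')$ would mean that $f_2({\mathcal S}^c(X))$ and $f_2({\mathcal S}^c(X'))$ lie in a common star; but ${\mathcal S}^c(X)\cup{\mathcal S}^c(X')$ contains two distinct tops' worth of subspaces and cannot embed adjacency-preservingly into a single star of $\Gamma_2(V)$. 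So the real content is surjectivity.

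**For surjectivity,** the key is to work inside the image. Fix a non-degenerate $2$-dimensional code $Y\in{\mathcal C}(n,2)_q$. Then ${\mathcal T}^c(Y)={\mathcal T}(Y)\cap{\mathcal C}(n,2)_q$ omits exactly the subspaces $Y\cap C_1,\dots,Y\cap C_n$; since $n<[2]_q$ precisely when $n<q+1$, which need not hold, I instead argue locally. The map $f_2$ restricted to ${\mathcal T}^c(Y)$ is an injection into ${\mathcal T}(f_2(Y))$ (because every $P\subset M$ for $M\in{\mathcal T}^c(Y)$ gives $f_2(M)\supset g_1(P)$ forces $f_2(M)\in{\mathcal T}(f_2(Y))$ once one checks $f_2(Y)$ is the right $3$-space — this follows from $g_1({\mathcal G}_1(Y))\subset{\mathcal G}_1(f_2(Y))$). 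Now $g_1$ maps ${\mathcal G}_1(Y)$ into ${\mathcal G}_1(f_2(Y))$, and both are projective lines over ${\mathbb F}_q$ with $q+1$ points. I want to show this map is onto ${\mathcal G}_1(f_2(Y))$. The point-line incidence on $Y$ is recovered through the codes in ${\mathcal T}^c(Y)$: a $1$-dimensional $P\subset Y$ corresponds to the pencil of non-degenerate $2$-codes in ${\mathcal T}^c(Y)$ containing... no — in dimension $2$ the top ${\mathcal T}(Y)$ is itself the projective line, so I should compare ${\mathcal G}_1(Y)$ with ${\mathcal T}(Y)$ directly. Cleaner: pass to the reformulation that $g_1$, $f_2$ together recover the incidence geometry, so surjectivity of $g_1$ on each ${\mathcal G}_1(Y)$ reduces to the claim that $f_2$ is surjective from ${\mathcal T}^c(Y)$ onto ${\mathcal T}(f_2(Y))\setminus\{\text{images of the coordinate sections}\}$, and a counting argument using that $|{\mathcal T}^c(Y)|=[k+1]_q-n(Y)$ with $k=1$... but $k\ge 2$ here, so $f_2$ lives over $\Gamma(n,2)_q$ and $Y$ is genuinely $3$-dimensional. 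Then ${\mathcal G}_1(Y)$ has $[3]_q=q^2+q+1$ points, $f_2(Y)$ likewise, and $g_1$ restricted there is injective, hence bijective by finiteness. That is the crux: once $g_1|_{{\mathcal G}_1(Y)}$ is shown injective into the equicardinal set ${\mathcal G}_1(f_2(Y))$, bijectivity is automatic.

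**So the argument collapses to:** (a) $g_1$ is injective on all of ${\mathcal G}_1(V)$ (global, from the star-separation argument above plus the fact that for $q\ge 3$ every ${\mathcal S}^c(X)$ is a genuine star, Proposition \ref{prop-star}(i)); and (b) for each $Y\in{\mathcal C}(n,2)_q$ we have $g_1({\mathcal G}_1(Y))={\mathcal G}_1(f_2(Y))$, using $g_1({\mathcal G}_1(Y))\subset{\mathcal G}_1(f_2(Y))$ together with injectivity and $|{\mathcal G}_1(Y)|=|{\mathcal G}_1(f_2(Y))|=[3]_q$. To finish, I would take an arbitrary $R\in{\mathcal G}_1(V)$ and find some $Y\in{\mathcal C}(n,2)_q$ with $R\in{\mathcal G}_1(Y)$ — always possible since $n\ge 4$ means $R$ lies on a non-degenerate line; then $g_1(R)$ is already in the image. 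Conversely every $S\in{\mathcal G}_1(V)$ is $g_1$ of something: pick a non-degenerate $3$-space $Y'$ of $V$ with $f_2(Y')\supset S$ — here I would use that the image subspaces $f_2(Y)$ as $Y$ ranges over ${\mathcal C}(n,2)_q$ together cover all of ${\mathcal G}_1(V)$ (they cover all $1$-spaces because $f_1$ is already defined on the large set ${\mathcal C}(n,1)_q$ and its image, being not contained in finitely many hyperplanes by a dimension count, spans enough). **The main obstacle** I anticipate is this last covering claim — ensuring the $f_2(Y)$'s do not all lie in a proper "degenerate-like" configuration of $V$; this should follow because $f$ is an isomorphism onto a subgraph whose maximal stars already biject with ${\mathcal C}(n,k-1)_q$, a set that is not contained in any finite union of hyperplanes, but making that rigorous in the recursion is the delicate point and likely where the proof in the paper does its real work.
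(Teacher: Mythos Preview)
The central oversight is that ${\mathcal G}_1(V)$ is a \emph{finite} set: we are working over ${\mathbb F}_q$, so $|{\mathcal G}_1(V)|=[n]_q$. Hence any injective self-map of ${\mathcal G}_1(V)$ is automatically bijective, and the entire surjectivity discussion---the local bijections on $[3]_q$-lines, the covering of ${\mathcal G}_1(V)$ by the subspaces $f_2(Y)$, the worry about degenerate configurations in the image---is unnecessary. The paper's proof opens with exactly this reduction: ``It is sufficient to show that $g_1$ is injective.''

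Your injectivity sketch also has a gap. The claim that ${\mathcal S}^c(X)\cup{\mathcal S}^c(X')$ ``cannot embed adjacency-preservingly into a single star of $\Gamma_2(V)$'' is not justified; a raw cardinality count does not obviously work when both $X,X'$ lie in coordinate hyperplanes, and the phrase ``two distinct tops' worth of subspaces'' is unclear (these are stars, not tops, and no top configuration inside the union is exhibited). The paper's argument is different and sharper: given distinct $P,Q$ with $g_1(P)=g_1(Q)=P'$, one uses $q\ge 3$ to choose three elements of ${\mathcal C}(n,1)_q$ spanning a $3$-dimensional subspace, so that some $P_i\in{\mathcal C}(n,1)_q$ satisfies $P_i\not\subset P+Q$. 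The \emph{maximal} star ${\mathcal S}(P_i)$ then meets ${\mathcal S}^c(P)$ and ${\mathcal S}^c(Q)$ in the distinct elements $P_i+P$ and $P_i+Q$, so the star $f_2({\mathcal S}(P_i))$ of $\Gamma_2(V)$ meets ${\mathcal S}(P')$ in at least two points, forcing $f_2({\mathcal S}(P_i))={\mathcal S}(P')$. But $f_2({\mathcal S}^c(P))\subset{\mathcal S}(P')=f_2({\mathcal S}(P_i))$ while ${\mathcal S}^c(P)\not\subset{\mathcal S}(P_i)$, contradicting the injectivity of $f_2$.
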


\begin{proof}
It is sufficient to show that $g_1$ is injective.
Since $q\ge 3$, we can take $P_1,P_2,P_3\in {\mathcal C}(n,1)_q$
whose sum is $3$-dimensional. 
For example, the $1$-dimensional subspaces 
$${\mathbb F}(1,\dots,1),\;{\mathbb F}(\alpha,1,\dots,1),\; {\mathbb F}(\alpha,\alpha,1,\dots,1),\;\;\alpha\ne 0,1$$
are as required. If 
$$g_1(P)=g_1(Q)=P'$$ 
for distinct $1$-dimensional subspaces $P,Q\subset V$,
then $f_2$ sends ${\mathcal S}^c(P)$ and ${\mathcal S}^c(Q)$ to subsets of ${\mathcal S}(P')$.
There is $i\in \{1,2,3\}$ such that $P_i\not\subset P+Q$. 
The maximal star  ${\mathcal S}(P_i)$ intersects ${\mathcal S}^c(P)$ and ${\mathcal S}^c(Q)$ in distinct elements
($P_i+P$ and $P_i+Q$, respectively).
Then $f_2({\mathcal S}(P_i))$ intersects ${\mathcal S}(P')$ in two distinct elements.
Since $f_2({\mathcal S}(P_i))$ is a star of $\Gamma_2(V)$, we have
$$f_2({\mathcal S}(P_i))={\mathcal S}(P')$$ which contradicts  the injectivity of $f_2$
(since $f_2({\mathcal S}^c(P))$ and $f_2({\mathcal S}^c(Q))$ both are contained in ${\mathcal S}(P')$).
\end{proof}

\begin{lemma}\label{lemma3-2}
For every $2$-dimensional subspace  $X\not\in {\mathcal C}(n,2)_q$ 
there are $X_1,X_2\in {\mathcal C}(n,1)_q$ such that 
$X+X_1$ and $X+X_2$ are distinct elements of ${\mathcal C}(n,3)_q$ and 
each of them defines a top of $\Gamma(n,2)_q$.
\end{lemma}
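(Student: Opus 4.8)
The plan is to reduce the assertion that a $3$-dimensional subspace $Y\supset X$ with $Y\in{\mathcal C}(n,3)_q$ ``defines a top'' to producing, inside ${\mathcal T}^c(Y)$, a clique of $\Gamma(n,2)_q$ which is not contained in any star of $\Gamma_2(V)$. Since the maximal cliques of $\Gamma_2(V)$ are stars and tops and two distinct tops of $\Gamma_2(V)$ meet in at most one element, such a clique forces every maximal clique of $\Gamma(n,2)_q$ containing ${\mathcal T}^c(Y)$ to lie in ${\mathcal T}(Y)$, hence to coincide with ${\mathcal T}^c(Y)$; this is exactly the device used in the proofs of Lemmas \ref{lemma-g1} and \ref{lemma-g2}. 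Because any two distinct planes of the $3$-dimensional $Y$ are adjacent, it therefore suffices to exhibit three non-degenerate planes $Z_1,Z_2,Z_3$ of $Y$ with $Z_1\cap Z_2\cap Z_3=0$.

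First I would fix a non-degenerate $1$-dimensional subspace $X_1={\mathbb F}u$ with $u\notin X$ and put $Y=X+{\mathbb F}u\in{\mathcal C}(n,3)_q$. The $q+1$ planes of $Y$ containing ${\mathbb F}u$ contain the non-degenerate vector $u$, hence are non-degenerate and lie in ${\mathcal T}^c(Y)$, and they form a line of ${\mathcal G}_2(V)$. Consequently ${\mathcal T}^c(Y)$ is a top as soon as it contains one further non-degenerate plane $Z$ with $u\notin Z$: then $Z$ together with any two of the planes through ${\mathbb F}u$ has trivial common intersection, and in fact $|{\mathcal T}^c(Y)|\ge q+2$. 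Every such $Z$ is of the form $Z=L+{\mathbb F}(x+u)$ with $L\subset X$ a line and $x\in X\setminus L$; a short computation in $Y$ shows that $Z$ is then $2$-dimensional and does not contain $u$, and that $Z$ is non-degenerate precisely when $u_i\neq-x_i$ for every coordinate $i$ with $X\cap C_i=L$ (for all other coordinates non-degeneracy of $Z$ is automatic, since $u$ is non-degenerate and every vector of $X$ vanishes on the coordinates on which $X$ itself vanishes).

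It remains to choose $u$ and to produce two choices giving distinct $Y$. If $n<q^2$, then for \emph{every} non-degenerate $u\notin X$ one has $|{\mathcal T}^c(Y)|=[3]_q-n(Y)\ge q^2+q+1-n>q+1$, so ${\mathcal T}^c(Y)$ cannot lie in a line of ${\mathcal G}_2(V)$ and is automatically a top; moreover two suitable $Y$ exist because the non-degenerate vectors span $V$ (here $q\ge3$ is used) and $\dim V=n\ge4$, so they cannot all lie in a single $3$-dimensional subspace. If $n\ge q^2$ (hence $n\ge9$), I would pick the line $L\subset X$ for which $T_L:=\{i: X\cap C_i=L\}$ is smallest; averaging over the $q+1$ lines of $X$ gives $|T_L|\le(n-1)/(q+1)$, whence $n-|T_L|\ge4$. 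Since $q\ge3$, each index of $T_L$ still leaves $q-2\ge1$ admissible values of $u_i$ and each index outside $T_L$ leaves $q-1\ge2$, so the admissible vectors $u$ form a product set whose affine span has dimension $\ge4$ and therefore lies in no $3$-dimensional subspace. Then I can pick an admissible $u^{(1)}\notin X$, and afterwards an admissible $u^{(2)}\notin X+{\mathbb F}u^{(1)}$; with $X_j={\mathbb F}u^{(j)}$ and $Y_j=X+X_j$ the planes $L+{\mathbb F}(x+u^{(j)})$ witness that each ${\mathcal T}^c(Y_j)$ is a top, and $Y_1\neq Y_2$ because $u^{(2)}\notin Y_1$.

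The main obstacle is precisely this last step when $q=3$: then each index of $T_L$ pins down $u_i$ completely, so the admissible set may be small, and one must still extract two admissible vectors yielding different $Y$. This is why the line $L$ is chosen to minimize $|T_L|$ and why the range $n<q^2$ is treated separately — the two ranges overlap, so the dimension bound $n-|T_L|\ge4$ is available exactly where it is needed. Everything else (that $L+{\mathbb F}(x+u)$ is genuinely a plane of $Y$ missing $u$, and that the three chosen planes meet only in $0$, so the corresponding triple is a clique of $\Gamma(n,2)_q$ lying in no star) is a routine computation inside the $3$-dimensional space $Y$.
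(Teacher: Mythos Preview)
Your argument is correct, but it follows a different route from the paper's. The paper works entirely by explicit construction: after normalizing so that $X\subset C_1$, it writes down concrete non-degenerate $1$-dimensional subspaces $P_1,\dots,P_4$ (built from a fixed scalar $\alpha\ne0,1$) together with companion subspaces $P'_i$, and for each $X+P_i$ exhibits by hand three non-degenerate planes with trivial common intersection; distinctness of two of the $X+P_i$ is then argued directly, with a separate computation when $X\subset P_1+P_2+P_3$, and a monomial automorphism reduces the general case to this normalized one.

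Your proof replaces all of this by a dichotomy on $n$. For $n<q^2$ you invoke (essentially Proposition~\ref{prop-top}) the inequality $|{\mathcal T}^c(Y)|\ge[3]_q-n>q+1$, which instantly makes every ${\mathcal T}^c(Y)$ a top, and obtain two distinct $Y$ from the fact that the non-degenerate vectors span $V$ when $q\ge3$. For $n\ge q^2$ you pick the line $L\subset X$ with smallest $T_L$, observe that the vectors $u$ making both $u$ and $L+{\mathbb F}(x+u)$ non-degenerate form a product set whose affine span has dimension $n-|T_L|\ge4$ (here the averaging bound $|T_L|\le(n-1)/(q+1)$ is exactly what is needed when $q=3$), and conclude that two such $u$ can be chosen outside a prescribed $3$-space. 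This is a clean existence argument; what you lose relative to the paper is the explicit, coordinate-level description of $X_1,X_2$, and you must carry an extra case split on $n$. One cosmetic point: your remark that ``the two ranges overlap'' is misleading as stated, since $n<q^2$ and $n\ge q^2$ are complementary; presumably you mean that the second argument already applies for $n\ge5$, so the boundary case causes no trouble.
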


\begin{proof}
Without loss of generality we can assume that $X\subset C_1$, i.e. the first coordinate of every vector belonging to $X$ is zero. 
Then $X$ contains a non-zero vector $x$ such that the first and the second coordinates of $x$ are zero. 

Let $\alpha$ be a non-zero element of the field ${\mathbb F}$ distinct from $1$.
First, we consider the case when $X$ contains a $1$-dimensional subspace 
$$Q={\mathbb F} (0,0,x_3,\dots,x_n),$$
where every $x_i$ is zero or $\alpha$.
The $1$-dimensional subspaces 
$$P_{1}={\mathbb F}(1,\dots,1),\; P_{2}={\mathbb F}(\alpha,1,\dots,1),\;P_{3}={\mathbb F}(\alpha,\alpha,1,\dots,1)$$
and 
$$P'_1={\mathbb F}(1,1,1-x_{3},\dots,1-x_n),$$
$$P'_2={\mathbb F}( \alpha,1,1-x_{3},\dots,1-x_n),$$
$$P'_{3}={\mathbb F}(\alpha,\alpha,1-x_{3},\dots,1-x_n)$$
belong to ${\mathcal C}(n,1)_q$ and
$$P'_i \subset Q+P_i$$ for every $i\in \{1,2,3\}$.
Each $X+P_i$  belongs to ${\mathcal C}(n,3)_q$.
If $T$ is a $1$-dimensional subspace of $X$ distinct from $Q$,
then 
$$Q+P_i,\;T+P_i,\;T+P'_i$$
form a clique of $\Gamma(n,2)_q$ which is not contained in a star.
Therefore, ${\mathcal T}^c(X+P_i)$ is a top of $\Gamma(n,2)_q$. 

The subspace $P_1+P_2+P_3$ is $3$-dimensional. 
If $X$ is not contained in $P_1+P_2+P_3$, then we have
$$X+P_i\ne X+P_j$$
for some distinct $i,j$.
If $X$ is contained in $P_1+P_2+P_3$,
then $X$ intersects $P_2+P_3$ in a $1$-dimensional subspace $Q'$ distinct from $P_2,P_3$.
This subspace contains the vector 
$$v=(\alpha,1,\dots,1)+a(\alpha,\alpha,1,\dots,1)=(\alpha+a\alpha,1+a\alpha,1+a,\dots, 1+a)$$
for a certain non-zero $a\in {\mathbb F}$.
Since $Q'\subset X\subset C_1$, we have $\alpha+a\alpha=0$.
Then $a+1=0$ and all coordinates of $v$, except the second, are zero.
So, $X$ contains the $1$-dimensional subspace $Q'={\mathbb F}e_2$.
The $1$-dimensional subspaces
$$P_4={\mathbb F}(\alpha,\alpha,\alpha,1,\dots,1)\;\mbox{ and }\;P'_4={\mathbb F}(\alpha,\alpha-1,\alpha,1,\dots,1)$$
belong to ${\mathcal C}(n,1)_q$. Also,
$$P'_4\subset Q'+P_4$$ 
and $X+P_4\in {\mathcal C}(n,3)_q$.
If $T$ is a $1$-dimensional subspace of $X$ distinct from $Q'$,
then 
$$Q'+P_4,\;T+P_4,\;T+P'_4$$
form a clique of $\Gamma(n,2)_q$ which is not contained in a star.
Thus $X+P_4$ defines a top of $\Gamma(n,2)_q$.
Since $P_4$ is not contained in $P_1+P_2+P_3$,
$$X+P_i\ne X+P_4$$
for every $i\in\{1,2,3\}$.

In the general case, there is a monomial linear automorphism $l$ of $V$ such that $l^{-1}(X)$ contains
a non-zero vector $(0,0,x_3,\dots,x_n)$, where every $x_i$ is zero or $\alpha$.
Then all $l(P_i)$ belong to ${\mathcal C}(n,1)_q$, each $X+l(P_i)$ defines a top of $\Gamma(n,2)_q$ and
$$X+l(P_i)\ne X+l(P_j)$$
for some distinct $i,j$.
\end{proof}

Recall that the projective space associated to $V$ is the point-line geometry 
whose points are $1$-dimensional subspaces of $V$ and the lines are subsets of type ${\mathcal G}_1(X)$,
where $X$ is a $2$-dimensional subspace of $V$.
By the Fundamental Theorem of Projective Geometry, 
every bijective transformation of ${\mathcal G}_1(V)$ sending lines to lines is induced by a semilinear automorphism of $V$.

\begin{lemma}\label{lemma3-3}
The bijective transformation $g_1$ sends lines to lines.
\end{lemma}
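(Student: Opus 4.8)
The plan is to show that $g_1$ maps every projective line $\mathcal G_1(X)$, $X\in\mathcal G_2(V)$, onto a projective line. Since $g_1$ is already known to be a bijection of $\mathcal G_1(V)$ (Lemma \ref{lemma3-1}) and extends $f_1$, it suffices to verify the line-preserving property locally and then invoke the Fundamental Theorem of Projective Geometry. I would split the argument according to whether $X$ is non-degenerate or not.

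First suppose $X\in\mathcal C(n,2)_q$. Then $\mathcal S^c(X)=\mathcal S(X)$ is a maximal star of $\Gamma(n,2)_q$, so $f_2(\mathcal S(X))=\mathcal S(f_1(X))=\mathcal S(g_1(X))$. Now pick $Y\in\mathcal C(n,3)_q$ with $X\subset Y$ (such a $Y$ exists because a generic $3$-dimensional subspace through a non-degenerate plane is non-degenerate; one can produce it explicitly as in Lemma \ref{lemma3-2}). The line $\mathcal G_1(X)$ is contained in $\mathcal G_1(Y)$, and from $g_1(\mathcal G_1(Y))\subset\mathcal G_1(f_2(Y))$ together with the fact that $f_2(\mathcal S(X))=\mathcal S(g_1(X))$ lies inside the top $\mathcal T(f_2(Y))$, the image $g_1(\mathcal G_1(X))$ is forced to lie in the unique line $\mathcal S(g_1(X))\cap\mathcal T(f_2(Y))$ of $\mathcal G_1(V)$ — here I am using that inside a fixed Grassmannian $\mathcal G_2(V)$ the stars and tops behave exactly as in $\Gamma_2(V)$. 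Since $g_1$ is injective and $|\mathcal G_1(X)|=q+1$ equals the size of a line, the image is all of that line.

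The harder case is $X\notin\mathcal C(n,2)_q$, and this is where Lemma \ref{lemma3-2} does the work. That lemma supplies $X_1,X_2\in\mathcal C(n,1)_q$ with $X+X_1\ne X+X_2$ both in $\mathcal C(n,3)_q$ and each defining a \emph{top} $\mathcal T^c(X+X_j)$ of $\Gamma(n,2)_q$. The point is that although $\mathcal S^c(X)$ need not be a maximal clique, the degenerate line $\mathcal G_1(X)$ still sits inside the two non-degenerate planes $X+X_1$ and $X+X_2$, and $g_1$ respects each of those planes via the top relation $g_1(\mathcal G_1(X+X_j))\subset\mathcal G_1(f_2(X+X_j))$. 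Intersecting, $g_1(\mathcal G_1(X))$ lies in $\mathcal G_1(f_2(X+X_1))\cap\mathcal G_1(f_2(X+X_2))$; because $f_2(X+X_1)\ne f_2(X+X_2)$ (injectivity of $f_2$ on tops, or of $g_1$ on the line $\mathcal G_1(X+X_1)$), this intersection is contained in a single $2$-dimensional subspace of $V$, hence in one line. Again injectivity of $g_1$ plus the cardinality count $|\mathcal G_1(X)|=q+1$ upgrades ``contained in a line'' to ``equal to a line''.

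The main obstacle is the non-degenerate case: without Lemma \ref{lemma3-2} one cannot guarantee that a degenerate $2$-dimensional $X$ is cut out by the \emph{intersection of two distinct non-degenerate $3$-spaces each carrying a genuine top of $\Gamma(n,2)_q$}, and it is precisely the existence of those two tops (whose images under $f_2$ are controlled, being maximal cliques) that pins down $g_1(\mathcal G_1(X))$. So the proof is essentially: reduce to the two configurations above, in each case sandwich $\mathcal G_1(X)$ between two maximal cliques whose $f_2$-images are understood, intersect, and count. Once every line goes to a line, $g_1$ is a collineation of the projective space of $V$, which is the assertion.
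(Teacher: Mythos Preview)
Your outline matches the paper's, but there is a real gap in the degenerate case, and the non-degenerate case is garbled. For $X\in\mathcal C(n,2)_q$ the correct argument is the one-liner $g_1(\mathcal G_1(X))\subset\mathcal G_1(f_2(X))$ (recorded just before Lemma~\ref{lemma3-1}) plus bijectivity of $g_1$; what you wrote does not type-check, since for a $2$-dimensional $X$ none of $\mathcal S^c(X)$ (as a star of $\Gamma(n,2)_q$), $f_1(X)$, $g_1(X)$, or $f_2(Y)$ for $3$-dimensional $Y$ makes sense.

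The substantive issue is for $X\notin\mathcal C(n,2)_q$. You invoke a ``top relation'' $g_1(\mathcal G_1(X+X_j))\subset\mathcal G_1(f_2(X+X_j))$, but $f_2$ does not act on the $3$-dimensional $Z_j=X+X_j$; the inclusion $g_1(\mathcal G_1(Y))\subset\mathcal G_1(f_2(Y))$ is available only for $Y\in\mathcal C(n,2)_q$. What is actually needed is a $3$-dimensional $Z'_j$ with $f_2(\mathcal T^c(Z_j))\subset\mathcal T(Z'_j)$, and this is \emph{not} automatic: a priori $f_2$ could carry the top $\mathcal T^c(Z_j)$ into a \emph{star} of $\Gamma_2(V)$, in which case no such $Z'_j$ exists. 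The paper rules this out using precisely the hypothesis $X_j\in\mathcal C(n,1)_q$ of Lemma~\ref{lemma3-2}: then $\mathcal S(X_j)$ is a maximal star with $|\mathcal S(X_j)\cap\mathcal T^c(Z_j)|>1$, its $f_2$-image is a star of $\Gamma_2(V)$, and any maximal clique of $\Gamma_2(V)$ meeting a star in more than one element must be a top. Once $Z'_j$ is in hand one still needs $g_1(P)\subset Z'_j$ for every $P\in\mathcal G_1(Z_j)$; this follows because $P+X_j\in\mathcal T^c(Z_j)$ (again using $X_j\in\mathcal C(n,1)_q$), hence $g_1(P)\subset f_2(P+X_j)\subset Z'_j$. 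Both of these uses of $X_j\in\mathcal C(n,1)_q$ are absent from your sketch, and without the first one the intersection argument has no foundation.
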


\begin{proof}
If $X\in {\mathcal C}(n,2)_q$, then 
$$g_1({\mathcal G}_{1}(X))\subset{\mathcal G}_{1}(f_2(X)).$$
Since $g_1$ is bijective, we obtain that 
$$g_1({\mathcal G}_{1}(X))={\mathcal G}_{1}(f_2(X)).$$
Let $X$ be a $2$-dimensional subspace of $V$ which does not belong to ${\mathcal C}(n,2)_q$.
By Lemma \ref{lemma3-2}, there are $X_1,X_2\in {\mathcal C}(n,1)_q$ such that 
$$Z_1=X+X_1\;\mbox{ and }\; Z_2=X+X_2$$
are distinct elements of ${\mathcal C}(n,3)_q$ and each of them defines  a top of $\Gamma(n,2)_q$.
We have
$$|{\mathcal S}(X_i)\cap {\mathcal T}^c(Z_i)|>1$$
for $i\in\{1,2\}$ which implies that
$f_2({\mathcal T}^c(Z_i))$ is contained in a top of $\Gamma_{2}(V)$, i.e.
$$
f_2({\mathcal T}^c(Z_i))\subset {\mathcal T}(Z'_i)
$$
for some $3$-dimensional subspace $Z'_i\subset V$,  $i\in\{1,2\}$.
Every $1$-dimensional subspace $P\subset Z_i$ is contained in a certain $Y\in {\mathcal T}^c(Z_i)$
and 
$$g_1(P)\subset f_2(Y)\subset Z'_i$$
which means that $g_1({\mathcal G}_1(Z_i))$ is contained in ${\mathcal G}_{1}(Z'_i)$.
Since $g_1$ is bijective, 
$$
g_1({\mathcal G}_1(Z_i))={\mathcal G}_{1}(Z'_i)
$$
and $Z'_1\ne Z'_2$. 
We have $X=Z_1\cap Z_2$ which implies that $g_1$ sends ${\mathcal G}_1(X)$ to ${\mathcal G}_1(X')$, where $X'=Z'_1\cap Z'_2$.
\end{proof}

So, $g_1$ is induced by a semilinear automorphism $l$ of $V$. 
Then $f_2$ also is induced by $l$ and the statement is proved for $k=2$.
Consider the case when $k\ge 3$.

\begin{lemma}\label{lemma3-4}
If $X\in {\mathcal C}(n,k)_q$ contains $P\in {\mathcal C}(n,1)_q$,
then 
$$f(X)=l(X).$$
\end{lemma}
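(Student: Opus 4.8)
The plan is to prove, by induction on $i\in\{2,\dots,k\}$, the stronger statement that $f_i(Y)=l(Y)$ for every $Y\in{\mathcal C}(n,i)_q$ containing some element of ${\mathcal C}(n,1)_q$; applying this with $i=k$ gives Lemma \ref{lemma3-4}, since $f_k=f$. The base case $i=2$ is already available: $g_1$ is induced by the semilinear automorphism $l$, hence so is $f_2$, and therefore $f_2(Y)=l(Y)$ for \emph{every} $Y\in{\mathcal C}(n,2)_q$.

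For the inductive step fix $i$ with $3\le i\le k$, assume the claim at level $i-1$, and let $Y\in{\mathcal C}(n,i)_q$ contain $P\in{\mathcal C}(n,1)_q$. Consider the $(i-1)$-dimensional subspaces $W$ with $P\subseteq W\subseteq Y$. Since $P$ is contained in no coordinate hyperplane, neither is $W$, so $W\in{\mathcal C}(n,i-1)_q$; moreover $W$ contains $P\in{\mathcal C}(n,1)_q$, so the inductive hypothesis yields $f_{i-1}(W)=l(W)$. As $Y\in{\mathcal S}(W)$ and $f_i({\mathcal S}(W))={\mathcal S}(f_{i-1}(W))$ for every $W\in{\mathcal C}(n,i-1)_q$, we obtain $l(W)=f_{i-1}(W)\subseteq f_i(Y)$. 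Hence $f_i(Y)$ contains $\sum_W l(W)=l\bigl(\sum_W W\bigr)$, the sums being over all such $W$, using that $l$ is a bijective semilinear automorphism. The $(i-1)$-dimensional subspaces between $P$ and $Y$ are in bijection with the hyperplanes of the $(i-1)$-dimensional space $Y/P$, and those hyperplanes span $Y/P$ because $i-1\ge 2$; therefore $\sum_W W=Y$, so $l(Y)\subseteq f_i(Y)$, and a dimension count gives $f_i(Y)=l(Y)$.

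The only delicate point is this last spanning step. It is exactly where the hypothesis $P\in{\mathcal C}(n,1)_q$ is needed: non-degeneracy of $P$ is what keeps every intermediate $(i-1)$-dimensional subspace $W$ non-degenerate, hence in the domain where the inductive hypothesis applies. It is also why the induction must be seeded at level $i=2$ by the separate, genuinely stronger fact that $f_2$ is globally induced by $l$: when $i=2$ the ``hyperplanes'' of $Y/P$ collapse to $\{0\}$ and carry no information. Everything else in the argument is immediate from the identity $f_i({\mathcal S}(X))={\mathcal S}(f_{i-1}(X))$ and the linearity of $l$.
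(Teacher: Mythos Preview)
Your proof is correct and rests on the same core observation as the paper's: any subspace containing $P\in{\mathcal C}(n,1)_q$ is automatically non-degenerate, so the nesting relation $f_{i-1}(W)\subset f_i(Y)$ is available along any chain through $P$. The organization differs, however. The paper does not induct on $i$; instead, for each $1$-dimensional $Q\subset X$ it runs a single chain $P\subset X_2\subset\cdots\subset X$ with $Q\subset X_2$, deduces $g_1(Q)\subset f_2(X_2)\subset f(X)$, and then uses bijectivity of $g_1$ to conclude $g_1({\mathcal G}_1(X))={\mathcal G}_1(f(X))={\mathcal G}_1(l(X))$. So the paper descends to dimension $1$ and compares points, while you ascend by induction and compare hyperplanes $W$ of $Y$ through $P$. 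Your packaging has the small virtue of not re-invoking bijectivity of $g_1$ at this stage (you only need that $f_2$ is induced by $l$); the paper's packaging avoids the explicit induction and the spanning argument for $\sum_W W=Y$.
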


\begin{proof}
Let $Q$ be a $1$-dimensional subspace of $X$.
Consider a sequence
$$P\subset X_2\subset\dots\subset X_{k-1}\subset X_k=X,$$
where  every $X_i$ is an $i$-dimensional subspace and $X_2$ contains $Q$.
Then $X_i\in {\mathcal C}(n,i)$ for every $i\in \{2,\dots,k\}$ and 
$$f_{i-1}(X_{i-1})\subset f_i(X_i)$$
if $i\ge 3$.
This implies that 
$$g_1(Q)\subset f_{2}(X_2)\subset f(X).$$
Since $g_1$ is bijective,  we have 
$$g_1({\mathcal G}_{1}(X))={\mathcal G}_{1}(f(X)).$$
On the other hand,  $g_1$ is induced by $l$ and
$$g_1({\mathcal G}_{1}(X))={\mathcal G}_{1}(l(X))$$
which gives the claim.
\end{proof}

\begin{lemma}\label{lemma3-5}
For every $(k-1)$-dimensional subspace $X\subset V$
there are $Z_1,Z_2\in {\mathcal C}(n,k)_q$ such that $X=Z_1\cap Z_2$ and each $Z_i$ contains an element of ${\mathcal C}(n,1)_q$.
\end{lemma}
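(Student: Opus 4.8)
The plan is to exhibit, for an arbitrary $(k-1)$-dimensional subspace $X\subset V$, two non-degenerate codes $Z_1,Z_2$ of dimension $k$ that both contain $X$, are distinct (hence intersect precisely in $X$), and each of which contains some $1$-dimensional non-degenerate subspace. First I would reduce to a normal form for $X$: applying a monomial linear automorphism of $V$ (which preserves ${\mathcal C}(n,k)_q$, ${\mathcal C}(n,1)_q$ and the whole incidence structure, and commutes with the claim), I may assume $X$ is spanned by standard-type vectors, so that the coordinate positions split into a block where $X$ already ``sees'' all coordinates and a block of positions $i$ with $X\subset C_i$. If $X\in{\mathcal C}(n,k-1)_q$ the statement is easy: pick any $P\in{\mathcal C}(n,1)_q$ with $P\not\subset X$, set $Z_1=X+P$, choose a second such $P'$ with $X+P'\ne X+P$ (possible since $k<n-1$ gives codimension at least $2$, and $q\ge 3$ or $k\ge 3$ guarantees enough room), and set $Z_2=X+P'$; both are non-degenerate because they contain the non-degenerate $X$.

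The substantive case is $X\notin{\mathcal C}(n,k-1)_q$, where one must simultaneously (a) add a direction to kill every coordinate hyperplane containing $X$, so that $Z_i$ becomes non-degenerate, and (b) arrange that $Z_i$ still contains a non-degenerate line, and (c) get two genuinely different such $Z_i$. For (a) and (b) I would look for a single vector $v$ all of whose coordinates are nonzero (so ${\mathbb F}v\in{\mathcal C}(n,1)_q$) such that $v\notin X$ but $v$ ``repairs'' all the bad coordinates of $X$; concretely, if $X\subset C_i$ exactly for $i$ in some index set $I$, I need the $I$-coordinates of $v$ to be nonzero, which is automatic once $v$ has all coordinates nonzero, and I need $X+{\mathbb F}v$ to still be a genuine $k$-space, i.e. $v\notin X$. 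Then $Z={\mathbb F}v+X$ is non-degenerate (its coordinate functionals are nonzero already on ${\mathbb F}v$) and contains the non-degenerate line ${\mathbb F}v$, giving (b) for free. For (c) I would produce a second such vector $w$ (again all coordinates nonzero, $w\notin X$) with $X+{\mathbb F}w\ne X+{\mathbb F}v$, i.e. $w\notin X+{\mathbb F}v$; since $X+{\mathbb F}v$ has codimension $n-k\ge 2$ and the set of vectors with some zero coordinate is a union of $n$ hyperplanes, a counting/covering argument (the projective space over ${\mathbb F}_q$ is not covered by the $n$ coordinate hyperplanes together with the hyperplanes meeting $X+{\mathbb F}v$ in codimension one, under hypothesis \eqref{eq-th}) yields such a $w$. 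This covering estimate, balancing $[n-k]_q$-type quantities against $n$ under the hypothesis $q\ge3$ or $k\ge3$, is the step I expect to be the main obstacle, and it is essentially where the special case $q=k=2$ would break down.

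Once $Z_1=X+{\mathbb F}v$ and $Z_2=X+{\mathbb F}w$ are in hand, the conclusion $X=Z_1\cap Z_2$ is immediate: $X\subseteq Z_1\cap Z_2$, both $Z_i$ are $k$-dimensional with $Z_1\ne Z_2$, so $Z_1\cap Z_2$ has dimension at most $k-1$, forcing equality with $X$. To keep the argument uniform I would, as in the proof of Lemma \ref{lemma3-2}, first treat a concrete ``diagonal'' configuration of $X$ using explicit all-ones-type vectors for $v$ and $w$, check the two needed inequalities there, and then transport the general case back to this one via a monomial automorphism; I expect the explicit vectors $(1,\dots,1)$ and a suitable $(\alpha,1,\dots,1)$-type perturbation with $\alpha\ne0,1$ (available since $q\ge3$, with the $q=2$, $k\ge3$ case handled by using extra coordinates) to do the job.
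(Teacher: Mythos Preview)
Your strategy is correct in outline---find two all-nonzero $1$-dimensional subspaces $P,P'$ with $X+P\ne X+P'$---but you have made the argument considerably harder than it needs to be, and your anticipated ``main obstacle'' is an artifact of the approach rather than a real difficulty. First, note that this lemma sits in the section for $q\ge 3$, so your remarks about handling $q=2$, $k\ge3$ are out of place here. Second, the paper's case split is different from yours and cleaner: it splits on whether $X$ already \emph{contains} some element of ${\mathcal C}(n,1)_q$ (not on whether $X$ itself is non-degenerate). If it does, any two distinct $k$-dimensional extensions of $X$ work immediately. If it does not, then since $q\ge 3$ one can choose $\alpha\ne 0,1$ and write down $k+1$ explicit non-degenerate lines
\[
P_1={\mathbb F}(1,\dots,1),\;P_2={\mathbb F}(\alpha,1,\dots,1),\;\dots,\;P_{k+1}={\mathbb F}(\underbrace{\alpha,\dots,\alpha}_{k},1,\dots,1),
\]
none of which lies in $X$ by hypothesis, so each $X+P_i\in{\mathcal C}(n,k)_q$. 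These $P_i$ span a $(k+1)$-dimensional subspace, hence cannot all lie in a single $k$-dimensional extension of $X$; thus two of the $X+P_i$ differ. That is the entire proof: no monomial normal form for $X$, and no covering estimate. Your proposed covering argument is in fact shaky as stated---for instance with $q=3$, $k=5$, $n=7$ one has $(q-1)^{n-1}=64<[k]_q=121$, so the crude count of non-degenerate lines against lines in a fixed $k$-subspace does not separate them---whereas the explicit $(k+1)$-tuple of $P_i$'s bypasses this entirely via a dimension pigeonhole.
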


\begin{proof}
If $X$ contains an element of ${\mathcal C}(n,1)_q$, then any two distinct $k$-dimensional subspaces containing $X$ are as required.
Suppose that there is no element of ${\mathcal C}(n,1)_q$ contained in $X$. 
Let $\alpha$ be an element of ${\mathbb F}$ distinct from $0,1$.
Each of the $1$-dimen\-sional subspaces
$$P_{1}={\mathbb F}(1,\dots,1),\; P_{2}={\mathbb F}(\alpha,1,\dots,1),\;\dots,\; P_{k+1}={\mathbb F}(\,\underbrace{\alpha,\dots,\alpha}_{k},1,\dots,1)$$
is not contained in $X$, i.e. every $X+P_i$ is an element of ${\mathcal C}(n,k)_q$.
Since the subspace $P_1+\dots+P_{k+1}$ is $(k+1)$-dimensional, there are at least two distinct $X+P_i$.
\end{proof}

Let $X$ be a $(k-1)$-dimensional subspace of $V$ and let $Z_1,Z_2$ be as in Lemma \ref{lemma3-5}.
Then $f(Z_i)=l(Z_i)$ (by Lemma \ref{lemma3-4}) and
$$l(X)=l(Z_1)\cap l(Z_2)=f(Z_1)\cap f(Z_2)=g_{k-1}(X).$$
Therefore, $g_{k-1}$ is induced by $l$.
On the other hand, 
$$g_{k-1}({\mathcal G}_{k-1}(Y))\subset {\mathcal G}_{k-1}(f(Y))$$
for every $Y\in {\mathcal C}(n,k)_q$ and we obtain that $f(Y)=l(Y)$.
So, $f$ is induced by $l$.

\section{Proof of Theorem \ref{main-1}. The case $q=2$, $k\ge 3$}
Suppose that $q=2$.
In this case, ${\mathcal C}(n,1)_2$ consists of one element, the $1$-dimen\-sional subspace containing the vector $(1,\dots,1)$.
By the statement (ii) of Proposition \ref{prop-star}, for a $1$-dimensional subspace $P\subset V$ the set ${\mathcal S}^c(P)$ is a star of $\Gamma(n,2)_2$ if and only if
the weight of $P$ (the number of non-zero coordinates of the non-zero vector belonging to $P$) is greater than $2$.
Let ${\mathcal G}'_{1}(V)$ be the set of all such $1$-dimensional subspaces of $V$.
By Lemma \ref{lemma-g2}, there is a map $$g_1:{\mathcal G}'_{1}(V) \to {\mathcal G}_1(V)$$
such that 
\begin{equation}\label{eq2-1}
f_{2}({\mathcal S}^c(P))\subset {\mathcal S}(g_1(P))
\end{equation}
for every $P\in {\mathcal G}'_{1}(V)$.

\begin{lemma}\label{lemma2-1}
If $k\ge 3$, then  $g_1$ is injective.
\end{lemma}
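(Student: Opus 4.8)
The plan is to argue by contradiction, mimicking the structure of Lemma~\ref{lemma3-1} but adapting it to the sparse situation $q=2$, where ${\mathcal C}(n,1)_2$ contains only the all-ones subspace and the ``maximal stars'' of $\Gamma(n,1)_2$ are essentially unavailable. Suppose $g_1(P)=g_1(Q)=P'$ for distinct $P,Q\in{\mathcal G}'_1(V)$. By \eqref{eq2-1}, $f_2$ sends both ${\mathcal S}^c(P)$ and ${\mathcal S}^c(Q)$ into the single star ${\mathcal S}(P')$ of $\Gamma_2(V)$. I would like to produce a third $1$-dimensional subspace $P_0\in{\mathcal G}'_1(V)$, not contained in $P+Q$, such that ${\mathcal S}^c(P_0)$ meets both ${\mathcal S}^c(P)$ and ${\mathcal S}^c(Q)$ in more than one common ``traceable'' element; the image $f_2({\mathcal S}^c(P_0))$ would then be forced into a star ${\mathcal S}(P'')$ of $\Gamma_2(V)$ that intersects ${\mathcal S}(P')$ in at least two points, hence $P''=P'$, and then $f_2$ is not injective on ${\mathcal S}^c(P)\cup{\mathcal S}^c(P_0)$ or similar --- the contradiction.

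The key steps, in order, are: (1) show that for $q=2$ and $k\ge 3$ the star ${\mathcal S}^c(P)$ for $P\in{\mathcal G}'_1(V)$ is large enough (using \eqref{eq-star}, it has at least $2^{\,n-k-c(P)+1}\ge 4$ elements when $c(P)\le n-k-1$, which is exactly the condition defining ${\mathcal G}'_1(V)$ via Proposition~\ref{prop-star}(ii)) that its image determines a unique star of $\Gamma_2(V)$ --- this is what makes $g_1$ well-defined, but I will reuse the size bound to control intersections; (2) given distinct $P,Q$, locate $2$-dimensional subspaces $Z_1,Z_2\supset P$ and $\widetilde Z_1,\widetilde Z_2\supset Q$ lying in ${\mathcal C}(n,2)_2$ (so $f_2$ sends their stars ${\mathcal S}(Z_i),{\mathcal S}(\widetilde Z_i)$ to honest stars of $\Gamma_3(V)$, since ${\mathcal S}^c$ of a $2$-space in ${\mathcal C}(n,2)_2$ is a maximal star); here $k\ge 3$ is used so that there exist $2$-dimensional non-degenerate subspaces through $P$ at all, and it is used again to ensure $\Gamma(n,2)_2$ behaves like the higher graphs; (3) find $P_0\in{\mathcal G}'_1(V)\setminus{\mathcal G}_1(P+Q)$ whose star ${\mathcal S}^c(P_0)$ shares at least two elements with each of ${\mathcal S}^c(P)$ and ${\mathcal S}^c(Q)$ --- concretely, $P_0+P$ and $P_0+Q$ give elements of ${\mathcal S}^c(P_0)$ that also lie in ${\mathcal S}^c(P)$ resp.\ ${\mathcal S}^c(Q)$, and one needs a second pair, which is where explicit vectors over ${\mathbb F}_2$ (all-ones-type vectors with a few coordinates flipped) must be written down and checked to have weight $>2$; (4) conclude as in Lemma~\ref{lemma3-1}: two distinct stars of $\Gamma_2(V)$ whose images under $f_2$ overlap in $\ge 2$ points force a coincidence contradicting injectivity of $f_2$.

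The main obstacle I anticipate is step~(3): over ${\mathbb F}_2$ there is very little room to maneuver, every nonzero scalar is $1$, and the weight-$>2$ constraint eliminates many candidate subspaces, so producing the auxiliary $P_0$ (and the required incidences) is a genuinely combinatorial task rather than the one-line choice available when $q\ge 3$. I expect the argument to split into cases according to the weights of $P$ and $Q$ and the dimension of $P+Q$ relative to the coordinate hyperplanes --- e.g.\ whether $P+Q$ is itself non-degenerate --- and in each case one exhibits explicit generating vectors. A secondary technical point is that, unlike in the $q\ge 3$ proof, we cannot invoke a large supply of maximal stars of $\Gamma(n,1)_2$, so every appeal to ``$f_2({\mathcal S}(\cdot))$ is a star of $\Gamma_3(V)$'' must be routed through $2$-dimensional non-degenerate subspaces and Lemma~\ref{lemma-g2}; keeping that bookkeeping straight, while ensuring all intermediate subspaces land in the right ${\mathcal C}(n,i)_2$, is where the hypothesis $k\ge 3$ is indispensable and where the write-up will need the most care.
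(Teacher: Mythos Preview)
Your step~(3) has a structural obstruction. First, for distinct $1$-dimensional subspaces $P_0,P$ the stars ${\mathcal S}(P_0),{\mathcal S}(P)\subset{\mathcal G}_2(V)$ meet only in $P_0+P$, so there is never a ``second pair'': at best ${\mathcal S}^c(P_0)$ contributes one element to ${\mathcal S}^c(P)$ and one to ${\mathcal S}^c(Q)$. More seriously, write $T={\mathbb F}(1,\dots,1)$ and suppose $T\subset P+Q$; over ${\mathbb F}_2$ this means the non-zero vectors of $P$ and $Q$ have complementary supports. For $P_0+P\in{\mathcal C}(n,2)_2$ the support of $P_0$ must contain the complement of the support of $P$, namely the support of $Q$; symmetrically $P_0+Q\in{\mathcal C}(n,2)_2$ forces the support of $P_0$ to contain that of $P$. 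Together these give $P_0=T\subset P+Q$, so \emph{no admissible $P_0$ exists} and the Lemma~\ref{lemma3-1} template cannot be run in this case, regardless of how you split on weights. (Even when $T\not\subset P+Q$, note that step~(4) needs ${\mathcal S}^c(P_0)$ to be a \emph{full} star to get the cardinality collision, which again forces $P_0=T$.)

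The paper handles exactly this dichotomy. When $T\not\subset P+Q$ it computes $f_2(T+P)=g_1(T)+g_1(P)$ and $f_2(T+Q)=g_1(T)+g_1(Q)$ directly (having first shown $g_1(T)\ne g_1(R)$ for all $R\ne T$) and reads off $g_1(P)\ne g_1(Q)$. When $T\subset P+Q$ it abandons the search for an auxiliary line and instead climbs a dimension: with $P'={\mathbb F}(0,1,\dots,1)$ and $Q'={\mathbb F}(1,\dots,1,0)$ it forms $X=P+P'$, $Y=P'+Q'$, $Z=Q+Q'$ in ${\mathcal C}(n,2)_2$, none containing $T$, so the already-settled case gives $f_2(X)=g_1(P)+g_1(P')$, etc.; then $X+Y$ and $Y+Z$ are distinct elements of ${\mathcal C}(n,3)_2$, and the injectivity of $f_3$ on them yields $g_1(P)\ne g_1(Q)$. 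This appeal to $f_3$ is where $k\ge3$ actually enters --- not through a size bound on stars as you suggest, but by making the map $f_3$ available at all.
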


\begin{proof}
Let $T$ be the $1$-dimensional subspace containing the vector $(1,\dots,1)$, i.e. the unique element of ${\mathcal C}(n,1)_2$. 
Then $f_2({\mathcal S}(T))$ is a star of $\Gamma_2(V)$.
If $g_1(T)=g_1(P)$ for a certain $P\in {\mathcal G}'_1(V)$ distinct from $T$,
then $f_2({\mathcal S}^c(P))$ is contained in  $f_2({\mathcal S}(T))$ which contradicts the injectivity of $f_2$.

Let $P$ and $Q$ be distinct elements of ${\mathcal G}'_{1}(V)$ different from $T$.
If $T$ is not contained in $P+Q$, then $T+P$ and $T+Q$ are distinct elements of ${\mathcal C}(n,2)_2$.
Since 
$$g_1(P)\ne g_1(T)\ne g_1(Q),$$ we have 
$$f_2(T+P)=g_1(T)+g_1(P)\;\mbox{ and }\;f_2(T+Q)=g_1(T)+g_1(Q).$$
These subspaces are distinct and, consequently, $g_1(P)\ne g_1(Q)$.

Suppose that $T\subset P+Q$ and $i$ is the weight of $P$.
First, we consider the case when $P$ contains the vector 
\begin{equation}\label{eq2-2}
(\underbrace{1,\dots,1}_{i}, 0,\dots,0).
\end{equation}
Then $Q$ contains the vector 
\begin{equation}\label{eq2-3}
(\underbrace{0,\dots,0}_{i}, 1,\dots,1).
\end{equation}
Since $P$ and $Q$ belong to ${\mathcal G}'_1(V)$, the both $i$ and $n-i$ are not less than $3$.
Let $P'$ and $Q'$ be the $1$-dimensional subspaces containing the vectors
$$(0,1,\dots,1)\;\mbox{ and }\; (1,\dots,1,0),$$
respectively.
The subspaces
$$X=P + P',\; Y=P'+Q',\;Z=Q+Q'$$
are mutually distinct elements of ${\mathcal C}(n,2)_2$, each of them does not contain $T$.
Therefore,
$$f_2(X)=g_1(P) + g_1(P'),\; f_2(Y)=g_1(P')+g_1(Q'),\;f_2(Z)=g_1(Q)+g_1(Q')$$
are mutually distinct.
The subspaces $X+Y$ and $Y+Z$ are distinct elements of ${\mathcal C}(n,3)_2$.
Note that $X+Y$ and $Y+Z$ are distinct elements of ${\mathcal C}(n,3)_2$ even if the both $i$ and $n-i$ are not less than $2$.
This fact will be used to prove Lemma \ref{lemma2-2}. We have
$$f_3(X+Y)=f_2(X)+f_{2}(Y)=g_1(P) + g_1(P')+g_1(Q'),$$
$$f_3(Y+Z)= f_2(Y)+f_2(Z)=g_1(Q) + g_1(P')+g_1(Q')$$
are distinct. The latter implies that $g_1(P)\ne g_1(Q)$.
In the general case, there is a monomial linear automorphism $l$ of $V$
transferring \eqref{eq2-2} and \eqref{eq2-3} to the non-zero vectors belonging to $P$ and $Q$, respectively. 
Applying the above arguments to the $2$-dimensional subspaces
$$P + l(P'),\; l(P')+l(Q'),\;Q+l(Q')$$
we establish that $g_1(P)\ne g_1(Q)$. 
\end{proof}

From this moment, we suppose that $k\ge 3$.

We extend $g_1$ to a transformation of ${\mathcal G}_1(V)$.
Let $P$ be a $1$-dimensional subspace of $V$ which does not belong to ${\mathcal G}'_{1}(V)$.
This is equivalent to the fact that the weight of $P$ is not greater than $2$.
In the case when $P$ is of weight $2$,
there are precisely two elements of ${\mathcal C}(n,2)_2$ containing $P$ (Remark \ref{rem-q=2}).
If $X$ and $Y$ are such elements of ${\mathcal C}(n,2)_2$, then $f_2(X),f_2(Y)$ are adjacent 
and we define $g_1(P)$ as their intersection. 
In the case when the weight of $P$ is $1$, the $2$-dimensional subspace $X$
spanned by $P$ and the vector $(1,\dots,1)$ is the unique element of ${\mathcal C}(n,2)_2$ containing $P$.
Let $P'$ and $P''$ be the $1$-dimensional subspaces of $X$ distinct from $P$.
They belong to ${\mathcal G}'_{1}(V)$ and $g_1(P')\ne g_1(P'')$.
We define $g_1(P)$ as the $1$-dimensional subspace of $f_2(X)$ distinct from $g_1(P')$ and $g_1(P'')$.

So, we obtain a transformation $g_1$ of ${\mathcal G}_1(V)$ such that \eqref{eq2-1} holds for every 
$1$-dimen\-sional subspace $P\subset V$ and
$$
g_1({\mathcal G}_1(X))\subset {\mathcal G}_1(f_2(X))
$$
for every $X\in {\mathcal C}(n,2)_2$.

\begin{lemma}\label{lemma2-2}
The extended transformation $g_1$ is bijective.
\end{lemma}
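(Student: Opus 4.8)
The plan is to show injectivity and surjectivity of the extended $g_1$ separately, reusing the construction just made and the bijectivity of the component maps $f_i$.

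For \emph{injectivity}: the restriction of $g_1$ to $\mathcal{G}'_1(V)$ is injective by Lemma~\ref{lemma2-1}, so I only need to rule out collisions involving the low-weight subspaces. Suppose $g_1(P)=g_1(Q)$ with $P\neq Q$. If both lie in $\mathcal{G}'_1(V)$ we are done; otherwise at least one, say $P$, has weight $\le 2$. First I would treat the subcase where there exists some $R\in\mathcal{G}'_1(V)$ with $R\not\subset P+Q$: then $R+P$ and $R+Q$ are distinct elements of $\mathcal{C}(n,2)_2$ containing $R$, and since $g_1(R)=g_1(R)$ and $g_1(R)\ne g_1(P), g_1(R)\ne g_1(Q)$ (the image of $R$ sits in the star, and the defining property of $g_1$ on $P,Q$ forces $g_1(P),g_1(Q)$ to be the ``other'' lines of $f_2(R+P)$, $f_2(R+Q)$), we get $f_2(R+P)=g_1(R)+g_1(P)$ and $f_2(R+Q)=g_1(R)+g_1(Q)$; these coincide, contradicting injectivity of $f_2$. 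Since $P+Q$ is $2$-dimensional and $\mathcal{G}'_1(V)$ is large when $n\ge 5$, such an $R$ exists except in a couple of small-dimensional configurations, which I would handle by the same mechanism one dimension up — exactly the phenomenon flagged in the proof of Lemma~\ref{lemma2-1}, where $X+Y$ and $Y+Z$ were noted to remain distinct elements of $\mathcal{C}(n,3)_2$ even when $i, n-i\ge 2$. Concretely, when $P$ has weight $2$ I would pick the two elements $X,Y$ of $\mathcal{C}(n,2)_2$ through $P$ and a third line $P'\in\mathcal{G}'_1(V)$ meeting the picture as in Lemma~\ref{lemma2-1}, form the $3$-dimensional subspaces, apply $f_3=f_3$, and read off a contradiction with $g_1(P)=g_1(Q)$. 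The weight-$1$ case reduces to the weight-$2$ case because the unique $2$-dimensional code through a weight-$1$ line contains two weight-$\ge 2$ lines whose images are already known to be distinct.

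For \emph{surjectivity}: since $V$ is finite and $g_1$ is injective it suffices to hit every $1$-dimensional subspace, or equivalently (by counting) injectivity alone finishes it. So in fact once injectivity is established, bijectivity of $g_1$ follows immediately from $|\mathcal{G}_1(V)|<\infty$. I would state this explicitly rather than argue surjectivity by hand.

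\textbf{The main obstacle} I expect is the bookkeeping in the injectivity argument for the low-weight lines: unlike in Lemma~\ref{lemma2-1}, here $P$ or $Q$ need not lie in $\mathcal{G}'_1(V)$, so $g_1(P)$ is defined only indirectly (as the ``missing'' line in $f_2$ of a specific $2$- or $3$-dimensional code), and I must be careful that the auxiliary $2$- and $3$-dimensional subspaces I build are genuinely non-degenerate, genuinely distinct, and genuinely do not contain the all-ones line $T$ (so that $f_2, f_3$ act as $f_2(A+B)=g_1(A)+g_1(B)$ on them). The delicate point is the degenerate case $T\subset P+Q$ together with $n$ small — exactly the situation where the ``one dimension up'' trick of Lemma~\ref{lemma2-1} is needed — and I would lean on the remark already inserted there that the relevant $3$-dimensional sums stay in $\mathcal{C}(n,3)_2$ under the weaker hypothesis $i, n-i\ge 2$, which is precisely what is available once we only assume the weights are $\ge 2$ rather than $\ge 3$.
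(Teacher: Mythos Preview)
Your overall plan matches the paper's: reduce to injectivity by finiteness, invoke Lemma~\ref{lemma2-1} on $\mathcal{G}'_1(V)$, and for low-weight $P,Q$ recycle the two-case split from that lemma, leaning on the remark there that the $3$-dimensional construction survives when $i,n-i\ge 2$. The surjectivity remark and the identification of that flagged sentence are exactly right.

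There is, however, a genuine gap in your ``easy'' subcase. You propose to pick \emph{some} $R\in\mathcal{G}'_1(V)$ with $R\not\subset P+Q$ and then assert (i) $R+P,R+Q\in\mathcal{C}(n,2)_2$ and (ii) $g_1(R)\ne g_1(P),\,g_1(R)\ne g_1(Q)$. Neither holds for a generic $R$. For (i), if $R$ and $P$ share a zero coordinate then $R+P$ is degenerate. For (ii), your parenthetical justification is circular: the defining property only gives $g_1(P)\subset f_2(X_1)\cap f_2(X_2)$, and nothing prevents $g_1(R)$ from also lying in $f_2(X_2)$ when $\mathcal{S}^c(R)$ is a \emph{proper} subset of the star $\mathcal{S}(g_1(R))$. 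The paper's remedy is to take $R=T$ throughout: then $T+P,T+Q$ are automatically non-degenerate, and because $\mathcal{S}(T)$ is a maximal star one has $f_2(\mathcal{S}(T))=\mathcal{S}(g_1(T))$ exactly, from which $g_1(T)\ne g_1(P)$ for every $P\ne T$ follows by injectivity of $f_2$. This anchor step, carried out \emph{first}, is what makes the remainder work; accordingly the case split should be on whether $T\subset P+Q$, not on the existence of a generic $R$. Finally, your weight-$1$ clause is phrased imprecisely: when $T\subset P+Q$ and $P$ has weight $1$, the other subspace $Q$ is forced to be the third line of $P+T$, so $g_1(P)\ne g_1(Q)$ is immediate from the very definition of the extension, not via a reduction to the weight-$2$ case.
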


\begin{proof}
It is sufficient to show that the extension is injective.
As in the proof of Lemma \ref{lemma2-1}, we assume that 
$T$ is the $1$-dimensional subspace containing the vector $(1,\dots,1)$. 
For every $1$-dimensional subspace $P$ of weight $1$ we have $g_1(P)\ne g_1(T)$ by the definition. 
If $P$ is a $1$-dimensional subspace of weight $2$ and $g_1(P)=g_1(T)$,
then $f_2({\mathcal S}^c(P))$ is contained in  $f_2({\mathcal S}(T))$ which contradicts the injectivity of $f_2$.

Let $P$ and $Q$ be distinct $1$-dimensional subspaces of $V$ different from $T$ and such that
the weight of at least one of them is not greater than $2$.
In the case when $T\not\subset P+Q$, we use reasonings from the proof of Lemma \ref{lemma2-1}.
Suppose that $T\subset P+Q$.
If one of $P,Q$ is of weight $1$, then $g_1(P)\ne g_1(Q)$ by the definition.
For the case when the weights of $P$ and $Q$ are not less than $2$ we apply arguments used in the proof of Lemma \ref{lemma2-1}.
\end{proof}

Now, we prove the direct analogue of Lemma \ref{lemma3-3}.

\begin{lemma}\label{lemma2-3}
The bijective transformation $g_1$ sends lines to lines.
\end{lemma}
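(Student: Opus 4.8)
The plan is to mimic the structure used for the $q\ge 3$ case in Lemma \ref{lemma3-3}, working entirely inside the $q=2$ world where ${\mathcal C}(n,1)_2$ degenerates to a single point but Lemma \ref{lemma2-2} now guarantees that $g_1$ is a bijective transformation of all of ${\mathcal G}_1(V)$. First I would handle the easy case: if $X\in{\mathcal C}(n,2)_2$, then by construction $g_1({\mathcal G}_1(X))\subset{\mathcal G}_1(f_2(X))$, and since $g_1$ is bijective and both sides are lines of the projective space (hence of the same finite cardinality $[2]_2=3$), equality holds and the line ${\mathcal G}_1(X)$ goes to the line ${\mathcal G}_1(f_2(X))$. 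So it remains to treat a $2$-dimensional subspace $X\subset V$ with $X\notin{\mathcal C}(n,2)_2$, i.e.\ $X$ lies in some coordinate hyperplane.

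The main work is to produce, for such an $X$, two distinct elements $Z_1,Z_2\in{\mathcal C}(n,3)_2$ with $X=Z_1\cap Z_2$ and such that each ${\mathcal T}^c(Z_i)$ is a top of $\Gamma(n,2)_2$ — the exact analogue of Lemma \ref{lemma3-2}, which was stated only for $q\ge 3$. The proof of Lemma \ref{lemma2-1} already contains the key device: starting from the vectors \eqref{eq2-2} and \eqref{eq2-3} one builds $2$-dimensional subspaces $X,Y,Z\in{\mathcal C}(n,2)_2$ whose pairwise sums $X+Y$ and $Y+Z$ are \emph{distinct} elements of ${\mathcal C}(n,3)_2$, and the remark there explicitly notes this persists whenever the two block lengths are $\ge 2$. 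I would reuse exactly this construction, together with a monomial linear automorphism to move a general coordinate-hyperplane $2$-space into the normal form, and additionally check (using the same ``clique not contained in a star'' criterion as in Lemmas \ref{lemma3-2}--\ref{lemma3-5}, with a $(k-3)$-dimensional complement since $k\ge 3$) that the associated $3$-dimensional subspaces define tops of $\Gamma(n,2)_2$. This step — verifying the top condition in characteristic $2$ where one has far fewer scalars to play with — is the main obstacle, and it is where the hypothesis $k\ge 3$ (guaranteeing enough room inside a $(k+1)$-space to exhibit a non-star clique) is genuinely used.

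Once $Z_1,Z_2$ are in hand, the rest is the same diagram chase as in Lemma \ref{lemma3-3}. From $|{\mathcal S}^c(X_i)\cap{\mathcal T}^c(Z_i)|>1$ (where $Z_i=X+X_i$ with $X_i$ of weight $\ge 3$) and the fact, established in Lemma \ref{lemma-g2}, that $f_2$ sends stars of $\Gamma(n,2)_2$ into stars of $\Gamma_2(V)$, I deduce that $f_2({\mathcal T}^c(Z_i))$ lies in a top ${\mathcal T}(Z_i')$ of $\Gamma_2(V)$ for some $3$-dimensional $Z_i'\subset V$. Every $1$-dimensional $P\subset Z_i$ sits in some member of ${\mathcal T}^c(Z_i)$, so $g_1(P)\subset f_2(\cdot)\subset Z_i'$, giving $g_1({\mathcal G}_1(Z_i))\subset{\mathcal G}_1(Z_i')$; bijectivity of $g_1$ upgrades this to equality and forces $Z_1'\ne Z_2'$. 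Finally $X=Z_1\cap Z_2$ yields $g_1({\mathcal G}_1(X))={\mathcal G}_1(X')$ with $X'=Z_1'\cap Z_2'$, completing the proof that $g_1$ carries lines to lines.
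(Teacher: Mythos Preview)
Your attempt to port Lemma~\ref{lemma3-3} verbatim to $q=2$ breaks at the covering step. You assert that ``every $1$-dimensional $P\subset Z_i$ sits in some member of ${\mathcal T}^c(Z_i)$'', but for $q=2$ this is simply false: a $3$-space $Z_i\in{\mathcal C}(n,3)_2$ has only three $2$-dimensional subspaces through a given point, and all three can be degenerate. For example, in $Z_i=\langle(1,1,0,0,\dots,0),(0,1,1,0,\dots,0),(0,0,0,1,\dots,1)\rangle$ the point $\langle(0,0,0,1,\dots,1)\rangle$ lies in no element of ${\mathcal T}^c(Z_i)$. Without the covering you cannot conclude $g_1({\mathcal G}_1(Z_i))={\mathcal G}_1(Z'_i)$, so bijectivity no longer forces $Z'_1\ne Z'_2$, and the final intersection step yields nothing. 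A related issue hits one step earlier: for $X_i$ of weight $\ge 3$ but $<n$ there is no reason why $|{\mathcal S}^c(X_i)\cap{\mathcal T}^c(Z_i)|>1$; that inequality was free in Lemma~\ref{lemma3-3} only because $X_i\in{\mathcal C}(n,1)_q$ made every $2$-plane through $X_i$ automatically non-degenerate, and for $q=2$ the single element of ${\mathcal C}(n,1)_2$ gives you only one $Z$, not two.

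The paper does not try to salvage the two-tops-and-intersect strategy. Instead it works pointwise with the three $1$-dimensional subspaces $Q_1,Q_2,Q_3$ of $X$, splitting into two combinatorial cases according to the zero-sets ${\mathcal N}(Q_i)$. In the generic case~(i) one builds several $3$-spaces $Z,Z_1,Z_2,Z_3\in{\mathcal C}(n,3)_2$ through $X$ and uses the fact (Lemma~\ref{lemma-g2}) that $f_3$ sends the resulting star of $\Gamma(n,3)_2$ into a star of $\Gamma_3(V)$; the common $2$-dimensional intersection $X'$ of the images $f_3(Z),f_3(Z_j)$ is then shown, via explicitly chosen elements of ${\mathcal C}(n,2)_2$, to contain each $g_1(Q_i)$. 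In the degenerate case~(ii) one works inside the single $3$-space $Z=P+X$ (with $P$ the all-ones point), identifies the exactly four members of ${\mathcal T}^c(Z)$, and reads off from their configuration in $f_3(Z)$ that $g_1(Q_1),g_1(Q_2),g_1(Q_3)$ are collinear. Both cases use $f_3$ in an essential way, which is where the hypothesis $k\ge 3$ actually enters.
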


\begin{proof}
If $X\in {\mathcal C}(n,2)_2$, then $g_1({\mathcal G}_1(X))$ is contained in ${\mathcal G}_1(f_2(X))$.
Since $g_1$ is bijective, we have
$$g_1({\mathcal G}_1(X))={\mathcal G}_1(f_2(X)).$$
Consider a $2$-dimensional subspace $X\subset V$ which does not belong to ${\mathcal C}(n,2)_2$.
Let $Q_1,Q_2,Q_3$ be the $1$-dimensional subspaces of $X$.

For every $1$-dimensional subspace $P\subset V$ we denote by ${\mathcal N}(P)$ the set of all indices $i\in \{1,\dots,n\}$
such that the $i$-coordinate of the non-zero vector of $P$ is zero. The weight of $P$ is equal to $n-|{\mathcal N}(P)|$.
Since $X$ does not belong to ${\mathcal C}(n,2)_2$,  
$${\mathcal N}(Q_1)\cap {\mathcal N}(Q_2) \cap {\mathcal N}(Q_3)\ne\emptyset.$$
If $t\in \{1,2,3\}$ and $\{1,2,3\}\setminus\{t\}=\{i,j\}$, then
$${\mathcal N}(Q_t)=({\mathcal N}(Q_i)\cap {\mathcal N}(Q_j))\cup ({\mathcal N}(Q_i)\cup {\mathcal N}(Q_j))^c$$
(for $I\subset \{1,\dots,n\}$ we denote by $I^c$ the complement $\{1,\dots,n\}\setminus I$).
Therefore, if there are distinct $i,j\in\{1,2,3\}$ such that 
\begin{equation}\label{eq2-4}
{\mathcal N}(Q_i)\not\subset {\mathcal N}(Q_j),\;{\mathcal N}(Q_j)\not\subset {\mathcal N}(Q_i)\;\mbox{ and }\; {\mathcal N}(Q_i)\cup {\mathcal N}(Q_j)\ne \{1,\dots,n\},
\end{equation}
then the same holds for any distinct $i,j\in\{1,2,3\}$.
On the other hand, if for some distinct $i,j\in\{1,2,3\}$ we have 
\begin{equation}\label{eq2-5}
{\mathcal N}(Q_i)\cup {\mathcal N}(Q_j)=\{1,\dots,n\}
\end{equation}
and $\{1,2,3\}\setminus\{i,j\}=\{t\}$, then 
$${\mathcal N}(Q_t)={\mathcal N}(Q_i)\cap {\mathcal N}(Q_j).$$
Conversely, 
if ${\mathcal N}(Q_t)\subset {\mathcal N}(Q_i)$ for some distinct $i,t\in \{1,2,3\}$, then 
\eqref{eq2-5} holds for $j\in\{1,2,3\}\setminus\{i,t\}$.

So, one of the following two possibilities is realized:
\begin{enumerate}
\item[(i)] \eqref{eq2-4} holds for any distinct $i,j\in \{1,2,3\}$;
\item[(ii)] there are distinct $i,j\in \{1,2,3\}$ satisfying \eqref{eq2-5}.
\end{enumerate}

{\it The case} (i). 
The sets
$$({\mathcal N}(Q_1)\cup {\mathcal N}(Q_2))^c,\;({\mathcal N}(Q_1)\cup {\mathcal N}(Q_3))^c,\;({\mathcal N}(Q_2)\cup {\mathcal N}(Q_3))^c$$
are non-empty.  Furthermore,  these sets are mutually disjoint 
(otherwise, there is $s\in \{1,\dots,n\}$ such that the $s$-coordinate of the non-zero vector of $Q_i$  is non-zero for every $i\in\{1,2,3\}$ which is impossible).
Without loss of generality we can assume that 
$$3\not\in {\mathcal N}(Q_1)\cup {\mathcal N}(Q_2),\;2\not\in {\mathcal N}(Q_1)\cup {\mathcal N}(Q_3),\;1\not\in {\mathcal N}(Q_2)\cup {\mathcal N}(Q_3).$$
Then 
$$1,2\not\in{\mathcal N}(Q_3),\;1,3\not\in{\mathcal N}(Q_2),\;2,3\not\in{\mathcal N}(Q_1)$$
which implies that $i\in{\mathcal N}(Q_i)$ for every $i\in\{1,2,3\}$
(otherwise, there is $i\in\{1,2,3\}$ such that the $i$-coordinates of the non-zero vectors of $Q_1,Q_2,Q_3$ are non-zero which is impossible).
Let $P,P_1,P_2,P_3$ be the $1$-dimensional subspaces of $V$ containing the vectors
$$(1,\dots,1),\;(0,1,\dots,1),\;(1,0,1,\dots,1),\;(1,1,0,1,\dots,1),$$
respectively. 
Then 
$$Z=P+X\;\mbox{ and }\;Z_i=P_i+X,\;i\in\{1,2,3\}$$
belong to ${\mathcal C}(n,3)_2$; furthermore, the following assertions are fulfilled: 
\begin{enumerate}
\item[$\bullet$] $P+Q_i$ belongs to ${\mathcal C}(n,2)_2$ for every $i\in\{1,2,3\}$;
\item[$\bullet$] $P_j+Q_i$ belongs to ${\mathcal C}(n,2)_2$ if  and only if $i\ne j$.
\end{enumerate}
This implies that
\begin{equation}\label{eq2-6}
g_1(Q_i)\subset f_2(P+Q_i)\subset f_3(Z),\;\;\;i\in\{1,2,3\}
\end{equation}
and
\begin{equation}\label{eq2-7}
g_1(Q_i)\subset f_2(P_j+Q_i)\subset f_3(Z_j),\;\;\;i\ne j.
\end{equation}

It is clear that $Z\ne Z_i$ for every $i\in \{1,2,3\}$, 
but we cannot state that $Z_i\ne Z_j$ for distinct $i,j$.
The subset of ${\mathcal C}(n,3)_2$ formed by $Z$ and all $Z_i$, $i\in\{1,2,3\}$ contains at least $2$ elements and
it is contained in a star of $\Gamma(n,3)_2$.
Since  $f_3$ sends stars of $\Gamma(n,3)_2$ to subsets in stars of $\Gamma_3(V)$,
the intersection of $f_3(Z)$ and all $f_3(Z_i)$ is a $2$-dimensional subspace $X'$.
By \eqref{eq2-6} and \eqref{eq2-7}, every $g_1(Q_i)$ is contained in $f_3(Z)$ and $f_3(Z_j)$ if $j\ne i$
which means that $g_1(Q_i)\subset X'$.
So, $g_1$ sends ${\mathcal G}_1(X)$ to ${\mathcal G}_1(X')$.

{\it The case} (ii). 
Without loss of generality we assume that
\begin{equation}\label{eq2-8}
{\mathcal N}(Q_1)\cup {\mathcal N}(Q_2)=\{1,\dots,n\}.
\end{equation}
As above, $P$ is the $1$-dimensional subspace containing the vector $(1,\dots,1)$ and $Z=P+X$.
Let $Q'_i$, $i\in \{1,2,3\}$ be the $1$-dimensional subspace of $P+Q_i$ distinct from $P$ and $Q_i$.
Then 
$${\mathcal N}(Q'_i)= {\mathcal N}(Q_i)^c$$
and 
$${\mathcal N}(Q'_1)\cap {\mathcal N}(Q'_2)=({\mathcal N}(Q_1)\cup {\mathcal N}(Q_2))^c=\emptyset.$$
The latter implies that $Q'_1+Q'_2$ belongs to ${\mathcal C}(n,2)_2$. Note that $Q_3$ is the third $1$-dimensional subspace of $Q'_1+Q'_2$.

By \eqref{eq2-8}, we have
$${\mathcal N}(Q_3)={\mathcal N}(Q_1)\cap {\mathcal N}(Q_2)$$
and, consequently, 
$${\mathcal N}(Q'_3)={\mathcal N}(Q'_1)\cup {\mathcal N}(Q'_2).$$
Then
$${\mathcal N}(Q_1)\cap {\mathcal N}(Q'_2)\cap {\mathcal N}(Q'_3)={\mathcal N}(Q_1)\cap {\mathcal N}(Q'_2)={\mathcal N}(Q_1)\cap {\mathcal N}(Q_2)^c\ne \emptyset$$
and 
$${\mathcal N}(Q_2)\cap {\mathcal N}(Q'_1)\cap {\mathcal N}(Q'_3)={\mathcal N}(Q_2)\cap {\mathcal N}(Q'_1)={\mathcal N}(Q_2)\cap {\mathcal N}(Q_1)^c\ne \emptyset$$
which means that $Q_1+Q'_2$ and $Q'_1+Q_2$ do not belong to ${\mathcal C}(n,2)_2$.
Note that the intersection of these $2$-dimensional subspaces is $Q'_3$.

So, $Z$ contains precisely four elements of ${\mathcal C}(n,2)_2$; these are
$$P+Q_{i},\;i\in\{1,2,3\}\;\mbox{ and }\;Q'_1+Q'_2.$$
The isomorphism $f_2$ transfers them to some $2$-dimensional subspaces of $f_3(Z)$.
Observe that
$g_1(P)$ is contained in all $f_2(P+Q_i)$ and $f_2(Q'_1+Q'_2)$ intersects 
$$f_2(P+Q_1),\;f_2(P+Q_2),\;f_2(P+Q_3)$$ 
in $g(Q'_1),g(Q'_2),g(Q_3)$ (respectively), see Fig.1.
This implies that all $g_1(Q_i)$ form a line of the projective space associated to $V$. 
\end{proof}
\begin{center}
\begin{tikzpicture}[scale=0.5]
\draw[line width=1pt] (0,0) circle (2cm);

\draw[fill=black] (90:4cm) circle (3pt);
\draw[fill=black] (330:4cm) circle (3pt);
\draw[fill=black] (210:4cm) circle (3pt);

\draw[fill=black] (270:2cm) circle (3pt);
\draw[fill=black] (150:2cm) circle (3pt);
\draw[fill=black] (30:2cm) circle (3pt);

\draw[thick,line width=1pt] (330:4cm) -- (90:4cm) -- (210:4cm);

\draw[thick,line width=1pt] (90:4cm) -- (270:2cm);

\node at (0,4.5cm) {$g_1(P)$};

\node at (-3.5cm,-2.5cm) {$g_1(Q_1)$};
\node at (3.5cm,-2.5cm) {$g_1(Q_2)$};
\node at (0,-2.5cm) {$g_1(Q_3)$};

\node at (-2.9,1.3cm) {$g_1(Q'_1)$};
\node at (3,1.3cm) {$g_1(Q'_2)$};
\end{tikzpicture}
\captionof{figure}{ }
\end{center}

So, $g_1$ is induced by a semilinear automorphism $l$ of $V$.
Since 
$$g_1({\mathcal G}_1(X))={\mathcal G}_{1}(f_2(X))$$
for every $X\in {\mathcal C}(n,2)_2$, $f_2$ also is induced by $l$.

\begin{lemma}\label{lemma2-4}
If $X\in {\mathcal C}(n,k)_2$ contains $$(1,\dots,1)\;\mbox{ or }\;v_i=(1,\dots,1)+e_i,$$
then 
$$f(X)=l(X).$$
\end{lemma}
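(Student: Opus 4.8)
The plan is to mimic the strategy of Lemma \ref{lemma3-4} from the case $q \ge 3$: build a chain of subspaces through $X$, push it down through the maps $f_i$ so as to pin down $f(X)$ via the $1$-dimensional pieces it contains, and then invoke the fact that $g_1$ is already known to be induced by $l$. The new subtlety is that over $\mathbb{F}_2$ the only element of ${\mathcal C}(n,1)_2$ is the subspace $T = \mathbb{F}(1,\dots,1)$, so the argument of Lemma \ref{lemma3-4} — which relied on $X$ containing \emph{some} non-degenerate $1$-dimensional subspace and then varying the chain over all $1$-dimensional subspaces of $X$ — cannot be run verbatim. Instead I would use the extended $g_1$ (bijective on all of ${\mathcal G}_1(V)$ by Lemma \ref{lemma2-2}) together with the fact, recorded just before the lemma, that $f_2$ is induced by $l$; this gives control of $f_2$ on \emph{every} $2$-dimensional subspace in ${\mathcal C}(n,2)_2$, not merely on stars.

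First I would treat the case $X \supset T$. Pick any $1$-dimensional subspace $Q \subset X$, and take a flag $T \subset X_2 \subset X_3 \subset \dots \subset X_k = X$ with $X_2 = T + Q$. Each $X_i$ lies in ${\mathcal C}(n,i)_2$ because it contains $T$, so the recursion relations $f_{i-1}(X_{i-1}) \subset f_i(X_i)$ apply for $i \ge 3$, and $f_2(X_2) = {\mathcal S}$-image forces $g_1(Q) \subset f_2(X_2) \subset f(X)$. Since $Q$ was an arbitrary $1$-dimensional subspace of $X$, we get $g_1({\mathcal G}_1(X)) \subset {\mathcal G}_1(f(X))$, and bijectivity of $g_1$ upgrades this to equality; since $g_1$ is induced by $l$, this reads ${\mathcal G}_1(l(X)) = {\mathcal G}_1(f(X))$, i.e. $f(X) = l(X)$. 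The only point needing care is that $X_2 = T + Q$ genuinely lies in ${\mathcal C}(n,2)_2$: this holds since $T$ has full weight $n > 2$, so $T+Q$ is not inside any coordinate hyperplane.

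For the case $X \supset v_i = (1,\dots,1) + e_i$, the subspace $X$ need not contain $T$, so I cannot use the flag above directly. The fix is to note that $v_i$ together with the standard basis vector $e_i$ spans the same $2$-dimensional subspace as $T$ and $v_i$ — more usefully, I would build the flag using $2$-dimensional steps that stay in ${\mathcal C}(n,2)_2$: choose $Q \subset X$ arbitrary, and a flag $\mathbb{F}v_i \subset X_2 \subset \dots \subset X_k = X$ with $X_2 = \mathbb{F}v_i + Q$. Here $v_i$ has weight $n-1 \ge 3$ (as $n > k+1 \ge 4$), so $X_2 \in {\mathcal C}(n,2)_2$ and again $f_{i-1}(X_{i-1}) \subset f_i(X_i)$ for $i \ge 3$, yielding $g_1(Q) \subset f_2(X_2) \subset f(X)$ for every $1$-dimensional $Q \subset X$, hence $f(X) = l(X)$ by bijectivity of $g_1$ and $g_1$ being induced by $l$ exactly as before. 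The one thing to verify is that each intermediate $X_j$ lies in ${\mathcal C}(n,j)_2$; this follows because $X_j \supset \mathbb{F}v_i$ and a subspace containing a weight-$(n-1)$ vector can be contained in at most the single coordinate hyperplane $C_i$, whereas $X_j$ also contains $Q \not\subset C_i$ can be arranged, or more simply $X_j \subset X \in {\mathcal C}(n,k)_2$ forces $X_j$ non-degenerate only if $j$ is large — so I should instead extend the flag \emph{downward from} $X$ rather than upward, choosing each $X_j$ to contain $v_i$ and be contained in $X$; since $X$ is non-degenerate the genuine obstacle is only at the bottom, where $X_2 = \mathbb{F}v_i + Q \subset C_i$ can happen, and I would resolve this by choosing $Q \not\subset C_i$, which is possible since $X \not\subset C_i$. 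The main obstacle, then, is purely bookkeeping: guaranteeing every term of the flag is a \emph{non-degenerate} code so that the recursion applies, which is forced by keeping a vector of weight $\ge n-1$ in each term and one extra generator outside $C_i$.
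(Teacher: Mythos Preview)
Your approach is the same as the paper's: run the flag argument of Lemma~\ref{lemma3-4}, using that $f_2$ and $g_1$ are already known to be induced by $l$. The case $X\ni(1,\dots,1)$ is handled identically.

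In the $v_i$ case there is one loose end you should close. You first assert that $X_2=\mathbb{F}v_i+Q\in{\mathcal C}(n,2)_2$ ``for every $1$-dimensional $Q\subset X$'' on the grounds that $v_i$ has weight $n-1$, and only afterwards notice that $X_2\subset C_i$ whenever $Q\subset C_i$. Restricting to $Q\not\subset C_i$ is indeed the right move (and is exactly what the paper does), but you then never say why this restricted family of $Q$'s still suffices to pin down $f(X)$. The missing sentence is that $X\cap C_i$ is a hyperplane of $X$, so the vectors in $X\setminus C_i$ span $X$; hence from $l(Q)=g_1(Q)\subset f(X)$ for every $Q\not\subset C_i$ one gets $l(X)\subset f(X)$ and therefore $f(X)=l(X)$. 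The paper records this as ``$l(X\setminus C_i)\subset f(X)$ which gives the claim.'' Once you add that observation, your argument is complete and matches the paper's.
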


\begin{proof}
If $X$ contains $(1,\dots,1)$, then we use arguments from the proof of Lemma \ref{lemma3-4}.

Suppose that $X$ contains $v_i$.
Since $X$ belongs to ${\mathcal C}(n,k)_2$,
the subspace $X\cap C_i$ is a hyperplane of $X$. 
For every vector $x\in X\setminus C_i$ 
the $2$-dimensional subspace $Y$ containing $x$ and $v_i$  belongs to ${\mathcal C}(n,2)_2$.
We  consider a sequence of subspaces
$$Y=Y_2\subset\dots\subset Y_{k-1}\subset Y_k=X,$$
where $Y_i\in{\mathcal C}(n,i)_2$ for every $i\in \{2,\dots,k\}$,
and establish that $l(Y)\subset f(X)$. 
Therefore, 
$$l(X\setminus C_i)\subset f(X)$$ 
which gives the claim.
\end{proof}

Denote by ${\mathcal G}'_{k-1}(V)$ the set of all $(k-1)$-dimensional subspaces $X\subset V$
satisfying 
$$c(X)\le n-k-1.$$
The statement (ii) of Proposition \ref{prop-star} says that ${\mathcal S}^c(X)$ is a star of $\Gamma(n,k)_2$
if and only if $X$ belongs to ${\mathcal G}'_{k-1}(V)$. 
By Lemma \ref{lemma-g2}, there is a map 
$$g_{k-1}:{\mathcal G}'_{k-1}(V)\to {\mathcal G}_{k-1}(V)$$
such that 
$$f({\mathcal S}^c(X))\subset {\mathcal S}(g_{k-1}(X))$$
for every $X\in {\mathcal G}'_{k-1}(V)$.
Note that $g_{k-1}$ is an extension of $f_{k-1}$.

Let  ${\mathcal G}''_{k-1}(V)$ be the set of all $(k-1)$-dimensional subspaces $X\subset V$ such that
$$c(X)\le n-k.$$
If a $(k-1)$-dimensional subspace $X\subset V$ does not belong to ${\mathcal G}''_{k-1}(V)$,
then $c(X)=n-k+1$ which implies that $X$ is spanned by some $e_{i_1},\dots,e_{i_{k-1}}$.

We extend $g_{k-1}$ to a map of ${\mathcal G}''_{k-1}(V)$.
By Remark \ref{rem-q=2}, for a $(k-1)$-dimen\-sional subspace $X\subset V$
satisfying $c(X)=n-k$ there are precisely two elements of ${\mathcal C}(n,k)_2$ containing $X$.
If $Y$ and $Y'$ are such elements of ${\mathcal C}(n,k)_2$, then $f(Y),f(Y')$ are adjacent and we define $g_{k-1}(X)$ as their intersection.

\begin{lemma}\label{lemma2-5}
For every $X\in{\mathcal G}''_{k-1}(V)$ there are $Y,Y'\in {\mathcal C}(n,k)_2$ such that 
$X=Y\cap Y'$ and each of $Y,Y'$ contains $(1,\dots,1)$ or $v_i$ for some $i$ {\rm(}$v_i$ is as in Lemma \ref{lemma2-4}{\rm)}.
\end{lemma}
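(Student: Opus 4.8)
The plan is to build $Y$ and $Y'$ explicitly by adding suitable one-dimensional subspaces to $X$, using that $X\in{\mathcal G}''_{k-1}(V)$ means $c(X)\le n-k$, so at least $k$ of the coordinate hyperplanes $C_i$ do \emph{not} contain $X$. First I would record the structure of such an $X$: since $c(X)\le n-k$, the restrictions to $X$ of the coordinate functionals indexed by the $i$ with $X\not\subset C_i$ span $X^{*}$, so $X$ has a basis $x^{(1)},\dots,x^{(k-1)}$ that behaves reasonably with respect to coordinates; more precisely, there is a set $I\subset\{1,\dots,n\}$ with $|I|\ge k$ such that $X\not\subset C_i$ for $i\in I$. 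I want to produce a vector $u$ of the form $(1,\dots,1)$ or $v_i=(1,\dots,1)+e_i$ with $u\notin X$, and then $Y:=X+{\mathbb F}_2 u\in{\mathcal C}(n,k)_2$ automatically (adding a vector of full weight or weight $n-1\ge k$, together with the fact that $X$ already meets all but at most $n-k$ coordinates, forces $Y$ to meet every coordinate hyperplane properly). The case analysis splits on whether $(1,\dots,1)\in X$.

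If $(1,\dots,1)\notin X$, the natural first attempt is $Y=X+{\mathbb F}_2(1,\dots,1)$; I then need a \emph{second} such subspace $Y'$ with $X=Y\cap Y'$, i.e. $Y'\ne Y$ must also be obtained by adding a vector $u'$ of the allowed form with $u'\notin Y$. Here I would use that $|I|\ge k>1$: pick two distinct indices $i,j\in I$ and consider $v_i=(1,\dots,1)+e_i$ and $v_j=(1,\dots,1)+e_j$. Since $v_i+v_j=e_i+e_j$ has weight $2<k$, at most one of $v_i,v_j$ can lie in $Y=X+{\mathbb F}_2(1,\dots,1)$ (if both did, then $e_i+e_j\in Y$, and then $Y$ would contain a weight-$2$ vector; but $\dim Y=k\ge 3$ and $Y\in{\mathcal C}(n,k)_2$ — one has to check this is actually excluded, which it is not in general, so instead I argue $e_i+e_j\in Y$ combined with $v_i\in Y$ gives $(1,\dots,1)+e_j\in Y$, i.e. $v_j\in Y$, and then together with $(1,\dots,1)\in Y$ we'd need the three vectors to be accounted for, which forces a specific low-dimensional configuration contradicting $k\ge 3$). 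The cleanest route: show directly that $v_i\notin Y$ for \emph{every} $i\in I$, because $v_i\in Y=X+{\mathbb F}_2(1,\dots,1)$ would mean $e_i\in X$, contradicting... no, $e_i\in X$ is not contradictory. So I instead take $Y'=X+{\mathbb F}_2 v_i$ for a well-chosen $i\in I$ with $v_i\notin X$ and $v_i\ne(1,\dots,1)$ mod $X$; such an $i$ exists since otherwise $v_i-( 1,\dots,1)=e_i\in X$ for all $i\in I$, forcing $X$ to contain $k\ge 3$ of the standard basis vectors, whence $c(X)\le n-k$ fails for those indices — wait, that gives $X\supset\langle e_i:i\in I\rangle$ of dimension $\ge k>k-1$, impossible. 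That contradiction is the crux.

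If instead $(1,\dots,1)\in X$, then I cannot use $(1,\dots,1)$ as the added vector. Here I pick two distinct $i,j\in I$ and set $Y=X+{\mathbb F}_2 v_i$, $Y'=X+{\mathbb F}_2 v_j$: neither $v_i$ nor $v_j$ lies in $X$ (since $v_i\in X$ together with $(1,\dots,1)\in X$ would give $e_i\in X$, then $X\subset C_j$ would still be possible, but $e_i\in X$ plus $e_j\in X$ would overdetermine the dimension — more carefully, $v_i\in X$ forces $e_i\in X$; if this held for all $i\in I$ we again get $\dim X\ge|I|\ge k$, impossible, so shrinking $I$ if needed I may assume $v_i\notin X$ for the chosen indices). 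Then $Y,Y'\in{\mathcal C}(n,k)_2$ because each already meets every coordinate: for coordinates in $I$ the subspace $X$ suffices, and for a coordinate $m\notin I$ (so $X\subset C_m$) the added vector $v_i$ or $v_j$ has nonzero $m$-th coordinate provided $m\ne i$ respectively $m\ne j$, and since $i,j\in I$ while $m\notin I$ this holds. Finally $Y\cap Y'=X$ because $Y\ne Y'$ (as $v_i\not\equiv v_j\pmod X$, else $e_i+e_j\in X$, and combined with $(1,\dots,1)\in X$ one gets $v_i\in X$ iff $v_j\in X$, a contradiction with $v_i\notin X$) and both are $k$-dimensional spaces containing the $(k-1)$-dimensional $X$.

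\textbf{Main obstacle.} The genuinely delicate point throughout is verifying that the enlarged subspaces $Y=X+{\mathbb F}_2 u$ actually land in ${\mathcal C}(n,k)_2$, i.e.\ that adding one vector repairs \emph{all} the (at most $n-k+1$, here at most $n-k$) coordinate hyperplanes that contained $X$ — one must choose the added vector $v_i$ with $i$ ranging over the ``good'' index set so that $v_i$ has a nonzero entry in every ``bad'' coordinate, and simultaneously ensure $v_i\notin X$; the counting inequality $|I|\ge k\ge 3$ is exactly what gives enough room to make these choices and to guarantee $Y\ne Y'$. I expect the write-up to reduce to a short but careful bookkeeping of which coordinate each candidate vector kills, organized by the dichotomy $(1,\dots,1)\in X$ or not.
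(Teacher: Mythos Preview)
Your overall strategy---enlarge $X$ by adjoining $(1,\dots,1)$ or some $v_i$, then use $|I|\ge k$ to force two distinct such enlargements---is exactly the paper's. But your case split (on whether $(1,\dots,1)\in X$) makes the bookkeeping harder than necessary, and your Case~2 contains a genuine error.

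In Case~2 you argue $Y\ne Y'$ because otherwise $e_i+e_j\in X$, and ``combined with $(1,\dots,1)\in X$ one gets $v_i\in X$ iff $v_j\in X$, a contradiction with $v_i\notin X$.'' There is no contradiction here: you chose \emph{both} $v_i\notin X$ and $v_j\notin X$, which is perfectly consistent with the biconditional. Concretely, for $n=5$, $k=3$, and $X$ the span of $(1,1,1,1,1)$ and $(1,1,0,0,0)$, one has $(1,\dots,1)\in X$, every $v_m\notin X$, yet $e_1+e_2\in X$, so $X+{\mathbb F}_2 v_1=X+{\mathbb F}_2 v_2$. You must therefore \emph{choose} $i,j$ with $e_i+e_j\notin X$, and proving such a pair exists needs a further counting argument you have not supplied. (Your Case~1 has a parallel but smaller gap: the ``otherwise'' should also cover the possibility $v_i\in X$, not only $e_i\in X$.) Incidentally, your ``Main obstacle'' paragraph misidentifies the difficulty: once $i\in I$, checking $X+{\mathbb F}_2 v_i\in{\mathcal C}(n,k)_2$ is routine; the real issue throughout is ensuring $Y\ne Y'$.

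The paper sidesteps all of this with a different split. First dispose of the case where $X$ already contains $(1,\dots,1)$ or some $v_i$: then any two distinct elements of ${\mathcal S}^c(X)$ inherit that vector and are as required. In the remaining case none of these vectors lies in $X$, so $Y=X+{\mathbb F}_2(1,\dots,1)$ and every $Y_i=X+{\mathbb F}_2 v_i$ (for $i\in I$) is automatically $k$-dimensional, and one only needs some $Y_i\ne Y$. If $Y_i=Y$ for all $i\in I$, then $e_i=(1,\dots,1)+v_i\in Y$ for every $i\in I$; since $|I|\ge k=\dim Y$, this forces $Y$ to be spanned by standard basis vectors, contradicting $(1,\dots,1)\in Y$. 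One clean contradiction replaces both of your cases.
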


\begin{proof}
If $X$ contains $(1,\dots,1)$ or $v_i$, then any two distinct $k$-dimensional subspaces containing $X$ are as required.

Suppose that $(1,\dots,1)$ and all $v_i$ do not belong to $X$.
Let $I$ be the set of all indices $i\in \{1,\dots,n\}$ such that $X$ contains a vector whose $i$-coordinate is non-zero.
Since $c(X)\le n-k$, this set contains at least $k$ indices. 
For every $i\in I$ we denote by $Y_i$ the subspace containing $X$ and $v_i$.
Let also $Y$ be the subspace containing $X$ and $(1,\dots,1)$.
By our assumption, $Y$ and all $Y_i$ are $k$-dimensional; furthermore, these subspaces belong to ${\mathcal C}(n,k)_2$.

We assert that there is  $i\in I$ such that $Y_i\ne Y$.
Indeed, if $Y=Y_i$ for all $i\in I$, then $Y$ contains the vector $(1,\dots,1)+v_i=e_i$ for every $i\in I$.
Since $|I|\ge k$,  the subspace $Y$ is spanned by some $e_{i_1},\dots,e_{i_k}$ which contradicts the fact that $(1,\dots,1)$ belongs to $Y$.
\end{proof}

Let $X\in{\mathcal G}''_{k-1}(V)$ and let $Y,Y'\in {\mathcal C}(n,k)_2$ be as in Lemma \ref{lemma2-5}.
Then 
$$f(Y)=l(Y),\;f(Y')=l(Y')$$ 
(by Lemma \ref{lemma2-4}) and
$$g_{k-1}(X)=f(Y)\cap f(Y')=l(Y)\cap l(Y')=l(Y\cap Y')=l(X).$$
Therefore, $g_{k-1}$ is induced by $l$.

Every $X\in{\mathcal C}(n,k)_2$ contains two distinct $Y,Y'\in {\mathcal G}''_{k-1}(V)$
(otherwise, there are distinct $(k-1)$-dimensional subspaces $Z,Z'\subset X$ satisfying $c(Z)=c(Z')=n-k+1$;
then $X=Z+Z'$ and $X$ is spanned by some $e_{i_1},\dots, e_{i_k}$ which is impossible).
Since 
$$g_{k-1}(Y)=l(Y)\;\mbox{ and }\;g_{k-1}(Y')=l(Y')$$ 
are distinct $(k-1)$-dimensional subspaces of $f(X)$, we have
$$f(X)=l(Y)+l(Y')=l(Y+Y')=l(X).$$
So, $f$ is induced by $l$.

\section{Proof of Theorem \ref{main-1}. The case $q=2$, $k=2$}

As in the previous section, we suppose that $q\ge 3$. 
For a non-empty subset $I\subset \{1,\dots,n\}$ we denote by $P_I$ 
the $1$-dimensional subspace of $V$ containing the vector whose $i$-coordinate is $1$ for every $i\in I$ 
and $0$ if $i\not\in I$. 
Let $H$ be the hyperplane of $V$ spanned by all $P_I$, where $I$ is an $(n-1)$-element subset of $\{1,\dots,n\}$ containing $n$.
It is easy to see that $P_{\{1,\dots,n\}}$ is not contained in $H$.
We decompose ${\mathcal C}(n,2)_2$ in three subsets ${\mathcal A}, {\mathcal B}, {\mathcal C}$ as follows:
\begin{enumerate}
\item[$\bullet$] ${\mathcal A}$ consists of all elements of ${\mathcal C}(n,2)_2$ containing $P_{\{1,\dots,n\}}$;
\item[$\bullet$] ${\mathcal B}$ is formed by all elements of ${\mathcal C}(n,2)_2$ contained in $H$;
\item[$\bullet$] ${\mathcal C}={\mathcal C}(n,2)_2\setminus({\mathcal A}\cup {\mathcal B})$.
\end{enumerate}

Consider $X\in {\mathcal C}$. This subspace intersects $H$ in a $1$-dimensional subspace
and we suppose that $P_I,P_J$  are the remaining two $1$-dimensional subspaces of $X$.
Then 
\begin{equation}\label{eq3-1}
I\cup J=\{1,\dots,n\}
\end{equation}
and $P_{I^c\cup J^c}$ is the $1$-dimensional subspace of $X$ contained in $H$
(recall that for every $S\subset\{1,\dots,n\}$ we denote by $S^c$ the complement of $S$ in $\{1,\dots,n\}$).
Then $P_{I^c},P_{J^c}$ are contained in $H$. 
Indeed, the $2$-dimensional subspaces 
$$P_{I}+P_{I^c},\;P_{J}+P_{J^c}$$
both contain $P_{\{1,\dots,n\}}$ and 
each of the $1$-dimensional subspaces $P_{\{1,\dots,n\}}$, $P_I$, $P_J$ is not in $H$.
Denote by $X^c$ the $2$-dimensional subspace of $V$ containing $P_{I^c}$ and $P_{J^c}$.
It is clear that $X^c\subset H$.
By \eqref{eq3-1}, we have $I^c\cap J^c=\emptyset$
and the third $1$-dimensional subspace of $X^c$  is $P_{I^c\cup J^c}$.
Since $P_{\{1,\dots,n\}}$ is not contained in $H$ and, consequently, in $X^c$, we have
$$I^c\cup J^c\ne \{1,\dots,n\}$$
which shows that $X^c$ does not belong to ${\mathcal C}(n,2)_2$.
Observe that $P_{I^c\cup J^c}$ is the intersection of $X$ and $X^c$.

If $Y\in {\mathcal C}$ is distinct from $X$, then $X^c$ and $Y^c$ are distinct.
Indeed, if $X^c=Y^c$ for a certain $Y\in {\mathcal C}$ distinct from $X$,
then $Y$ contains $P_{I\cap J}$ and one of the subspaces $P_I,P_J$ which contradicts the fact that $Y\in {\mathcal C}(n,2)_2$.

Let $h$ be the map of ${\mathcal C}(n,2)_2$ to ${\mathcal G}_2(V)$
which leaves fixed  every element of ${\mathcal A}\cup {\mathcal B}$ and sends every $X\in {\mathcal C}$ to $X^c$.
Since $X^c$ does not belong to ${\mathcal C}(n,2)_2$ for each $X\in {\mathcal C}$ and $X^c\ne Y^c$ for distinct $X,Y\in {\mathcal C}$, this map is injective. 

\begin{lemma}
The map $h$ is adjacency preserving. 
\end{lemma}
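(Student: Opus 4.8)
The plan is to verify that $h$ preserves adjacency by examining all pairs of adjacent elements $X,Y\in{\mathcal C}(n,2)_2$ and checking that $h(X),h(Y)$ remain adjacent. Since $h$ is the identity on ${\mathcal A}\cup{\mathcal B}$ and sends $X\in{\mathcal C}$ to $X^c$, there are essentially three cases to treat: both $X,Y\in{\mathcal A}\cup{\mathcal B}$; exactly one of them, say $X$, lies in ${\mathcal C}$; and both lie in ${\mathcal C}$. The first case is trivial since $h$ fixes both subspaces. So the work is in the mixed case and the case of two elements of ${\mathcal C}$.

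For the case where both $X,Y\in{\mathcal C}$ are adjacent, recall that two distinct $2$-dimensional subspaces are adjacent exactly when they share a common $1$-dimensional subspace. Write the $1$-dimensional subspaces of $X$ as $P_I,P_J,P_{I^c\cup J^c}$ with $I\cup J=\{1,\dots,n\}$ and those of $Y$ similarly as $P_K,P_L,P_{K^c\cup L^c}$ with $K\cup L=\{1,\dots,n\}$. The shared line must be the common point; I would analyze which of the three lines of $X$ equals which of the three lines of $Y$. The key identity is that $X^c$ has lines $P_{I^c},P_{J^c},P_{I^c\cup J^c}$ (the last being $P_{I^c}+P_{J^c}$'s third point, since $I^c\cap J^c=\emptyset$). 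One then checks: if $X,Y$ meet in the $H$-line $P_{I^c\cup J^c}=P_{K^c\cup L^c}$, this forces $\{I^c\cup J^c\}=\{K^c\cup L^c\}$, hence $X^c$ and $Y^c$ both contain $P_{I^c\cup J^c}$ and are adjacent; if instead $X,Y$ share $P_I=P_K$ (say), then $I=K$, so $X^c,Y^c$ both contain $P_{I^c}=P_{K^c}$ and are again adjacent. The remaining subcases (sharing $P_I=P_{K^c\cup L^c}$, etc.) are forced to be impossible because that line lies in $H$ while $P_I$ does not. So in every realizable subcase, $h(X)=X^c$ and $h(Y)=Y^c$ meet in a common line.

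For the mixed case, suppose $X\in{\mathcal C}$ and $Y\in{\mathcal A}\cup{\mathcal B}$ are adjacent. If $Y\in{\mathcal B}$, then $Y\subset H$; the common line of $X$ and $Y$ lies in both, so it must be $X\cap H=P_{I^c\cup J^c}$. Now $X^c\subset H$ contains $P_{I^c\cup J^c}$ as well, so $X^c$ and $Y$ both lie in $H$ and both contain $P_{I^c\cup J^c}$, hence are adjacent (or equal, but $X^c\notin{\mathcal C}(n,2)_2$ while $Y\in{\mathcal C}(n,2)_2$, so they are distinct and adjacent). If instead $Y\in{\mathcal A}$, then $Y\ni P_{\{1,\dots,n\}}$; the common line of $X,Y$ is one of the three lines of $X$, and it must not be $P_{\{1,\dots,n\}}$ itself since $P_{\{1,\dots,n\}}\not\subset H$ need not lie in $X$ — more precisely, one checks the shared line is $P_I$ or $P_J$ (not the $H$-line, since $Y\not\subset H$). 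Say the shared line is $P_I$, so $P_I\subset Y$, and since $P_{\{1,\dots,n\}}\subset Y$ the third line of $Y$ is $P_{I^c}$. Then $X^c\ni P_{I^c}$ and $Y\ni P_{I^c}$, so $h(X)=X^c$ and $h(Y)=Y$ share $P_{I^c}$ and are adjacent.

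The main obstacle I anticipate is the bookkeeping in the two-elements-of-${\mathcal C}$ case: there are several ways two triples of the form $\{I,J,I^c\cup J^c\}$ and $\{K,L,K^c\cup L^c\}$ can intersect, and one must carefully use the constraints $I\cup J=\{1,\dots,n\}$, $I^c\cap J^c=\emptyset$, together with the location relative to $H$ (which distinguishes the "outside" lines $P_I,P_J$ from the "inside" line $P_{I^c\cup J^c}$) to rule out the spurious subcases and confirm that the surviving ones produce adjacent images. Once the case analysis is organized by \emph{which type of line is shared}, each subcase reduces to a one-line verification that $X^c$ and $Y^c$ (or $Y$) contain a common $P_S$.
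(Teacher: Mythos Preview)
Your overall plan is correct and matches the paper's proof: case-split on how $X,Y$ are distributed among ${\mathcal A}\cup{\mathcal B}$ and ${\mathcal C}$, and in each case locate a common $1$-dimensional subspace of $h(X)$ and $h(Y)$. Your treatment of the case $X,Y\in{\mathcal C}$ and of the case $Y\in{\mathcal B}$ is fine.

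There is, however, a genuine slip in the mixed case with $Y\in{\mathcal A}$. You assert that the shared $1$-dimensional subspace must be $P_I$ or $P_J$, ``not the $H$-line, since $Y\not\subset H$''. That inference is wrong: $Y\not\subset H$ only says that \emph{some} line of $Y$ lies outside $H$, not that every line of $Y$ does. In fact it is perfectly possible that $Y\in{\mathcal A}$ meets $X$ in the $H$-line $P_{I^c\cup J^c}$: take $Y=P_{\{1,\dots,n\}}+P_{I^c\cup J^c}$, which is non-degenerate because it contains $P_{\{1,\dots,n\}}$. So you are missing a subcase. Fortunately the fix is immediate and uses exactly the observation you already made for $Y\in{\mathcal B}$: since $X^c$ also contains $P_{I^c\cup J^c}$, the images $h(X)=X^c$ and $h(Y)=Y$ share that line and are adjacent.

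The paper avoids this pitfall by organizing the mixed case differently: instead of splitting on $Y\in{\mathcal A}$ versus $Y\in{\mathcal B}$, it splits on whether $X\cap Y\subset H$ or not. If $X\cap Y\subset H$, then $X\cap Y$ is the $H$-line of $X$, hence lies in $X^c$, and one is done regardless of whether $Y$ is in ${\mathcal A}$ or ${\mathcal B}$. If $X\cap Y=P_I\not\subset H$, then necessarily $Y\in{\mathcal A}$, and your argument via $P_{I^c}$ applies. This organization is slightly cleaner and sidesteps the error above.
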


\begin{proof}
Let $X$ and $Y$ be adjacent elements of ${\mathcal C}(n,2)_2$.
We need to show that $h(X),h(Y)$ are adjacent.
Since the restriction of $h$ to ${\mathcal A}\cup {\mathcal B}$ is identity, 
the statement holds if $X,Y$ both belong to ${\mathcal A}\cup {\mathcal B}$.

We will use the following properties of  $Z\in {\mathcal C}$: 
\begin{enumerate}
\item[$\bullet$] the $1$-dimensional subspace $Z\cap H$ is contained in $Z^c$;
\item[$\bullet$] if $P_{I}\subset Z$ is not contained in $H$, then $P_{I^c}\subset Z^c$.
\end{enumerate}

Suppose that $X,Y\in {\mathcal C}$. If the $1$-dimensional subspace $X\cap Y$ is contained in $H$, 
then 
$$X\cap Y= X^c\cap Y^c$$
which means that $h(X)=X^c,h(Y)=Y^c$ are adjacent. 
If $X\cap Y=P_I$ is not contained in $H$, then 
$$X^c\cap Y^c=P_{I^c}$$
and $h(X)=X^c, h(Y)=Y^c$ are adjacent. 

Consider the case when $X\in {\mathcal C}$ and $Y\in {\mathcal A}\cup {\mathcal B}$.
If the $1$-dimensional subspace $X\cap Y$ is contained in $H$, then $X^c$ contains $X\cap Y$ and 
$h(X)=X^c,h(Y)=Y$ are adjacent. 
Assume that $X\cap Y=P_I$ is not contained in $H$.
Then $Y\in {\mathcal A}$. Since $P_{\{1,\dots,n\}}\subset Y$, the third $1$-dimensional subspace of $Y$ is $P_{I^c}$
and we have
$$X^c\cap Y=P_{I^c}$$
which implies that $h(X)=X^c, h(Y)=Y$ are adjacent. 
\end{proof}

Consider the restriction of $\Gamma_2(V)$ to the image of $h$. 
Removing from this restriction all edges connecting $h(X)$ and $h(Y)$ if $X,Y\in {\mathcal C}(n,2)_2$ are not adjacent,
we obtain a subgraph of $\Gamma_2(V)$ isomorphic to $\Gamma(n,2)_2$.

Now, we show that $h$ is adjacency preserving only in one direction. This means that $h$ cannot be extended to an automorphism of $\Gamma_2(V)$.

The $2$-dimensional subspaces
$$X=P_{\{1,3,\dots,n\}}+P_{\{2,\dots,n\}}\;\mbox{ and }\;Y=P_{\{1,2,4,\dots,n\}}+P_{\{3,\dots,n\}}$$
belong to ${\mathcal C}(n,2)_2$.
The third $1$-dimensional subspaces of $X$ and $Y$ are $P_{\{1,2\}}$ and $P_{\{1,2,3\}}$, respectively.
Therefore, $X$ and $Y$ are not adjacent.  
We have $X\in {\mathcal B}$ (since $H$ contains $P_{\{1,3,\dots,n\}}$ and $P_{\{2,\dots,n\}}$).
Observe that
$$P_{\{1,\dots,n\}}\subset P_{\{1,2\}}+P_{\{3,\dots,n\}},\;\; P_{\{1,\dots,n\}}\not\subset H,\;\;P_{\{1,2\}}\subset H,$$
i.e. $H$ does not contain $P_{\{3,\dots,n\}}$ and, consequently, $Y\in {\mathcal C}$.
The subspaces
$$h(X)=X=P_{\{1,3,\dots,n\}}+P_{\{1,2\}}\;\mbox{ and }\;h(Y)=Y^c=P_{\{1,2,4,\dots,n\}}+P_{\{1,2\}}$$
are adjacent.

\end{document}